\newcommand{\be} {\begin{eqnarray}}
\newcommand{\ee} {\end{eqnarray}}
\newcommand{\bep} {\begin{eqnarray*}}
\newcommand{\eep} {\end{eqnarray*}}
\newcommand {\m}{\mathop{\mathcal{M}}\nolimits}
\newcommand {\Hol}{\mathop{\rm Hol}\nolimits}
\newcommand {\Id}{\mathop{\rm Id}\nolimits}
\renewcommand {\Re}{\mathop{\rm Re}\nolimits}
\newcommand {\A}{\mathcal{A}}
\newcommand {\St}{\mathop{\mathcal{S}^*_e}\nolimits}
\newcommand {\Sta}{\mathop{\mathcal{S}^*(\B)}\nolimits}
\newcommand{\R}{{\mathbb R}}
\newcommand{\N}{{\mathbb N}}
\newcommand{\B}{{\mathbb B}}
\newcommand{\C}{{\mathbb C}}
\newcommand{\Z}{{\mathbb Z}}
\newcommand {\D}{\mathbb{D}}
\newtheorem{remar}{Remark}[section]
\newtheorem{examp}{Example}[section]
\newtheorem{defin}{Definition}[section]
\newtheorem{corol}{Corollary}[section]
\newtheorem{propo}{Proposition}[section]
\newtheorem{theorem}{Theorem}[section]
\newtheorem{lemma}{Lemma}[section]
\newcommand{\rema}{\begin{remar}\rm}
\newcommand{\erema}{$\blacktriangleright$\end{remar}}
\newcommand{\exa}{\begin{examp}\rm}
\newcommand{\eexa}{$\blacktriangleright$\end{examp}}
\def\lwvec(#1 #2){\linewd 0.1
           \lvec(#1 #2)
           \linewd 0.05}
\begin{document}

\title[Multidimensional analogs of the Fekete--Szeg\"{o} functional]{Multidimensional analogs of the Fekete--Szeg\"{o} functional}

\begin{abstract} 
In this paper we  introduce the Fekete--Szeg\"{o} type mapping in the open unit ball of a complex Banach space. 
All previously studied modifications of the Fekete--Szeg\"{o} functional are either special cases or `components' of the mapping we introduce.

The study involves the examination of transforms of the Fekete--Szeg\"o mapping under specific transformations applied to given holomorphic mappings.

We show that for a  mapping $f$,  the third order Fréshet derivative of the inverse mapping $f^{-1}$ and of elements of the semigroup generated by $f$ can be expressed in terms of the Fekete--Szeg\"{o} mapping. Estimates of the Fekete--Szeg\"o mapping over some subclasses of semigroup generators and of starlike mappings are also presented.
\end{abstract}

\author[M. Elin]{Mark Elin}

\address{Department of Mathematics,
         Ort Braude College,
         Karmiel 21982,
         Israel}

\email{mark$\_$elin@braude.ac.il}

\author[F. Jacobzon]{Fiana Jacobzon}

\address{Department of Mathematics,
         Ort Braude College,
         Karmiel 21982,
         Israel}

\email{fiana@braude.ac.il}

\keywords{Fekete--Szeg\"{o} inequality, holomorphically accretive mapping, spirallike
mapping, non-linear resolvent}
\subjclass[2020]{Primary 32H02; Secondary 30C45}

\keywords{Fekete--Szeg\"{o} functional, composition of mappings, starlike mapping}
\subjclass[2020]{Primary 46G20; Secondary 32H02, 30C45}

\newenvironment{dedication}
  {
   \itshape             
  \raggedright         
  }
  {\par 
  }
\maketitle



\section{Introduction}\label{sect-intro}  
This paper focuses on multi-dimensional versions of the famous Fekete--Szeg\"o functional.  

\subsection{State of the art}
In their investigation  on the Bieberbach conjecture --- specifically in their disproof of Littlewood and Paley's conjecture --- Michael Fekete and Gabor Szeg\"o \cite{F-S} introduced the functional named after them and established its sharp estimate over the class of univalent normalized functions, refer to \cite{Dur} for details. This functional did not appear  accidentally, but because it articulates various geometric quantities in essential function transformations.
Additionally, it coincides with special cases of some other important functionals, such as the  Schwarzian derivative at zero,  Hankel's determinant and the generalized Zalcman's functional  (see \cite{E-V, Kanas, K-D, Ma-Mi, Zap-Tur} for details).
For these reasons, it has attracted keen attention of researchers in geometric function theory.  
In particular,  for ninety years, hundreds papers have derived estimates on the Fekete--Szeg\"o functional over diverse classes  of analytic functions (for some intriguing results see, e.g., \cite{C-K-S, E_J-23, Ke-Me, Koepf}).

As for the multidimensional  settings,  after Henri Cartan \cite{Cartan} showed that the Bieberbach conjecture fails in $\C^n$, interest in estimates of analogs of the Fekete--Szeg\"{o} functional waned. However, this interest has experienced a revival in the last decade. 

Several modifications to the classical Fekete--Szeg\"o functional have been proposed. Their estimates over various classes of holomorphic mappings were studied (see, for instance, \cite{D-L-22, E-J-22a, E-J-22b, Ham2023, HKK2021, HKK22} and \cite{XL14}--\cite{X-X-18}). 

\subsection{Goals of paper}
In this paper we consider holomorphic mappings in the open unit ball $\B$ of a complex Banach space $X$. Taking in mind the diversity of suggested modifications, our first purpose~is 

$\bullet\ $  {\it to  construct a unified  generalization of the Fekete--Szeg\"{o} functional that will include as  particular cases all analogs suggested earlier.} 

To do this, we  introduce the Fekete--Szeg\"{o} type mapping $f \mapsto\Psi_e(f,\lambda,\mu)$, $\lambda,\mu\in \C$, $e\in\partial \B$ in Definition~\ref{def-FS-oper}.
All previously studied modifications of the Fekete--Szeg\"{o} functional can be expressed in terms 
$\Psi_e(f,\lambda,\mu)$.

\vspace{2mm}
 Recall that in the one-dimensional case the  Fekete--Szeg\"o functional is of permanent interest in itself, including due to its  inherent  properties. However, geometric and analytical properties of its multi-dimensional analogs have not been studied at all. Thus the second and main goal of this work~is 

$\bullet\ $  {\it to establish properties of multi-dimensional generalizations of the Fekete--Szeg\"{o} functional.} 

It turns out that  properties of the mapping $\Psi_e(f,\lambda,\mu)$ are very similar to those of the original Fekete--Szeg\"{o} functional. Furthermore, a careful study of these properties shows that the faithful generalization of this  functional should be an $X$-valued mapping depending on two parameters.

\vspace{2mm}

Since we study a newly introduced correspondence $f\mapsto \Psi_e(f,\lambda,\mu)$, it is relevant 

$\bullet\ $  {\it to check its suitability by estimating $\Psi_e(\cdot, \lambda,\mu)$  over some broadly studied classes of mappings.} 

Results presented below essentially achieve our  goals.

\subsection{Main results and outlines} Let $\lambda,\mu\in \C$ and $e\in\partial \B$ be given.
As it is already mentioned, we introduce the Fekete--Szeg\"{o} mapping $f \mapsto\Psi_e(f,\lambda,\mu)$  in Definition~\ref{def-FS-oper} below. Its main properties can be formulated as follows.

\begin{theorem}\label{thm-main1}
  Let mapping $f $ be holomorphic in a neighborhood of the origin and represented there by the series of homogeneous polynomials $$f(x)=x+\sum\limits_{k=2}^\infty P_k(x).$$
\begin{itemize}
  \item [(i)] If  $P_2(e)=\nu e$, $\nu\in\C,$ and $g(x)=x+\sum\limits_{k=2}^\infty Q_k(x)$  is the $n$-th root transform of~$f$ defined by \eqref{sq-root}, then   $Q_{2n+1}(e)=\frac1n\, \Psi_e(f,\frac{n-1}{2n}, \mu)$ for any $\mu \in \C$.  In addition, if $f$ is of one-dimensional type and univalent in $\B$, then $g$ also is.
  \item [(ii)] If $X$ is a Hilbert space, $U$ is a unitary operator on $X$, and $g=U^*f\circ U$, then $\Psi_e(g,\lambda,\mu)= U^* \Psi_{\tilde{e}}(f,\lambda,\mu)$ with $\tilde{e}=Ue$.
  \item [(iii)] If the second derivative of either $f$ or $g$ at the origin is zero, then $\Psi_e( f\circ g,\lambda,\mu)  =  \Psi_e( f , \lambda, \mu)  +     \Psi_e( g, \lambda, \mu)$.
  \item [(iv)]If $f^{-1}$ denotes the inverse mapping of $f$, then \\ $\Psi_e(f^{-1},\lambda,\mu)=  -\Psi_e(f,2-\lambda,2-\mu)$. 
    \item [(v)] If $f^n$ denotes the $n$-th iterate of $f$, then $ \Psi_e(f^n,\lambda,\mu) =n \Psi_e(f,\lambda_n,\mu_n)$,  where $\lambda_n=n\lambda-n+1$ and $\mu_n=n\mu-n+1$.
 \end{itemize}  
 \end{theorem}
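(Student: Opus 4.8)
The plan is to reduce part~(v) to an explicit computation of the second- and third-order homogeneous terms of the iterate $f^{n}$ and then to substitute them into the definition of $\Psi_e$. Write $f(x)=x+P_2(x)+P_3(x)+\cdots$ and let $B$ be the symmetric bilinear mapping polarizing $P_2$, so that $P_2(x)=B(x,x)$. For $g(x)=x+Q_2(x)+Q_3(x)+\cdots$, expanding a single composition and collecting homogeneous parts gives
\begin{equation*}
(f\circ g)(x)=x+\bigl(P_2(x)+Q_2(x)\bigr)+\bigl(P_3(x)+Q_3(x)+2B(x,Q_2(x))\bigr)+\cdots ,
\end{equation*}
which is the only input about composition that I need.

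First I would run the induction $f^{n}=f\circ f^{n-1}$ on this expansion. Writing $f^{n}(x)=x+P_2^{(n)}(x)+P_3^{(n)}(x)+\cdots$, the order-two part satisfies $P_2^{(n)}=P_2+P_2^{(n-1)}$, whence $P_2^{(n)}=nP_2$ and the bilinear form of $f^{n}$ is $nB$. For the order-three part, the cross term above with $Q_2=P_2^{(n-1)}=(n-1)P_2$ yields $P_3^{(n)}=P_3^{(n-1)}+P_3+2(n-1)\,C$, where $C(x):=B(x,P_2(x))$; summing this telescoping recursion gives
\begin{equation*}
P_2^{(n)}(e)=n\,P_2(e),\qquad P_3^{(n)}(e)=n\,P_3(e)+n(n-1)\,C(e).
\end{equation*}
Getting the $n(n-1)$ coefficient right is the main computational point; everything after it is bookkeeping.

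Next I would feed these into Definition~\ref{def-FS-oper}. In $\Psi_e(\cdot,\lambda,\mu)$ the term $P_3(e)$ enters linearly with coefficient one, while the subtracted terms are quadratic in the second-order data and therefore scale by $n^{2}$ under $P_2\mapsto nP_2$; moreover the genuinely new contribution $n(n-1)\,C(e)$ to $P_3^{(n)}(e)$ is itself one of these quadratic invariants, namely the vector $C(e)=B(e,P_2(e))$. Collecting terms, $\Psi_e(f^{n},\lambda,\mu)$ equals $n\,P_3(e)$ plus the quadratic invariants of $P_2$ with explicit scalar coefficients depending on $n$ and on $(\lambda,\mu)$. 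Comparing this with $n\,\Psi_e(f,\lambda_n,\mu_n)$ reduces the statement to one scalar identity per quadratic invariant.

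Finally I would verify these scalar identities. Each is of the form ``the coefficient at $f^{n}$ equals $n$ times the coefficient at $f$ after the replacement $(\lambda,\mu)\mapsto(\lambda_n,\mu_n)$''; since every coefficient function of the definition is affine in $(\lambda,\mu)$, each identity is linear and is met exactly by $\lambda_n=n\lambda-n+1$ and $\mu_n=n\mu-n+1$. For the invariant $C(e)$ the extra $n(n-1)\,C(e)$ forces a constant shift, which is precisely the $-n+1$ in the substitution, whereas for the remaining invariant only the $n^{2}$ rescaling occurs and the constants cancel; this is why the same affine form serves both parameters. As a consistency check, these coefficient relations agree with part~(iv) under $(\lambda,\mu)\mapsto(2-\lambda,2-\mu)$ and $f\mapsto f^{-1}$. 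Alternatively, one may argue by induction on $n$ directly at the level of $\Psi_e$, using the composition expansion together with $P_2^{(n-1)}=(n-1)P_2$ to evaluate the cross term, and I would choose whichever is shorter once the explicit coefficients are inserted.
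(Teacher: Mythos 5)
Your argument for part~(v) is correct: the composition expansion you write down is exactly the paper's formula~\eqref{R2R3}, your telescoped recursion yields $T_{n,2}=nP_2$ and $T_{n,3}(x)=nP_3(x)+(n^2-n)B[x,P_2(x)]$ in agreement with Proposition~\ref{thm-itera}, and your coefficient matching is sound. Indeed, writing $\Psi_e(f,\lambda,\mu)=P_3(e)-\mu B[e,P_2(e)]-(\lambda-\mu)\langle P_2(e),e^*\rangle P_2(e)$, your substitution gives
\begin{equation*}
\Psi_e(f^n,\lambda,\mu)=nP_3(e)+\bigl(n(n-1)-\mu n^2\bigr)B[e,P_2(e)]-(\lambda-\mu)n^2\langle P_2(e),e^*\rangle P_2(e),
\end{equation*}
and the identities $n(n-1)-\mu n^2=-n\mu_n$ and $n(\lambda-\mu)=\lambda_n-\mu_n$ are satisfied precisely by $\mu_n=n\mu-n+1$, $\lambda_n=n\lambda-n+1$. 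This differs from the paper only cosmetically: the paper runs the induction at the level of $\Psi_e$ itself, using the general composition formula~\eqref{Psi-compo} of Theorem~\ref{lemm-compos} applied to $g=f^k$, whereas you compute the Taylor data in closed form first and substitute once; both rest on the same expansion~\eqref{R2R3}.

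The genuine gap is coverage: the statement you were asked to prove is the full five-part Theorem~\ref{thm-main1}, and your proposal addresses only part~(v). Parts~(iii) and~(iv) are at least within reach of your machinery --- (iii) because all four error terms in~\eqref{Psi-compo} vanish when $B=0$ or $C=0$, and (iv) by solving $f\circ g=\Id$ for $Q_2=-P_2$ and $Q_3=2B[x,P_2(x)]-P_3(x)$ and substituting --- but you do not carry these steps out, and (iv) in particular requires the short computation done in Proposition~\ref{thm-compo} showing the result equals $-\Psi_e(f,2-\lambda,2-\mu)$ exactly. Parts~(i) and~(ii) need ideas entirely absent from your proposal: part~(i) rests on the binomial expansion of $\kappa^{1/n}$ for the root transform~\eqref{sq-root} under the hypothesis $P_2(e)=\nu e$ (Theorem~\ref{thm-FS-th}), plus a separate univalence argument for one-dimensional-type mappings (Theorem~\ref{thm-root-tr-univ}); part~(ii) uses the polarization identity~\eqref{symmet} to commute $D^2g(0)$ past the unitary conjugation (Theorem~\ref{thm-rotation}). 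As submitted, your proof establishes one fifth of the theorem and would need these four additional arguments to be complete.
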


We prove these properties in Sections~\ref{sect-FSM} and~\ref{sect-compos}. Moreover, it turns out that the third Fr\'{e}chet derivative of the inverse mapping $f^{-1}$ and of each element of the semigroup generated by $f$ in a neighborhood of the origin are proportional to the Fekete--Szeg\"{o} mapping (see Theorems~\ref{thm-compo} and~\ref{th-semigr}).

As an additional task, we estimate the  ``error term'' $$R(f,g) =\Psi_e( f\circ g,\lambda,\mu)  -\bigl( \Psi_e( f , \lambda, \mu)  +     \Psi_e( g, \lambda, \mu) \bigr)$$ for the addition relation in general, cf. assertion (iii) in Theorem~\ref{thm-main1}. 

Section~\ref{sect-m} is devoted to the last goal formulated above. More precisely, we examine estimates of  the Fekete--Szeg\"{o} mapping $\Psi_e(\cdot, \lambda,\mu)$ over the class of semigroup generators $\m$, the associated class of starlike mappings and their special subclasses. 

It is worth mentioning that for a mapping $h\in \m$ and for the associated starlike mapping $f$, we prove that their Fekete--Szeg\"{o} mappings satisfy
\[
\Psi_e\left(h,2\lambda,2\mu\right)=-2\Psi_e(f,1-\lambda,1-\mu).
\] 

In order to present our results, we proceed with  some preliminary facts and notations in the next section.

 \medskip

\section{Preliminaries}\label{sect-preli}

Let $\D=\{z \in \C: |z|<1\}$ be  the open unit disk in the complex plane $\C$, and let a holomorphic in $\D$ function $f$ be represented by the Taylor series 
\begin{equation}\label{taylor}
  f(z)=z+\sum\limits_{k=2}^\infty a_kz^k.
\end{equation}

The {\it Fekete--Szeg\"o functional}  $f\mapsto\psi(f,\lambda)$ is defined by 
\begin{equation}\label{FS-func}
 \psi(f,\lambda):=a_3-\lambda a_2^2.
 \end{equation}

It was introduced in 1933 by Fekete and Szeg\"o \cite{F-S} whose proved that  {\it if function $f$ 
is univalent, then  
\[
|\psi(f,\lambda)| \le \left\{ \begin{array}{ll}
                               3-4\lambda, & \mbox{when } \lambda\le0,\vspace{2mm} \\
                                1+2\exp\left( \frac{-2\lambda}{1-\lambda} \right),  & \mbox{when }0\le \lambda\le1, \vspace{2mm}\\
                                4\lambda-3, & \mbox{when } \lambda\ge1 ,
                                                 \end{array}   \right.
\]
and this estimate is sharp.}   Particular attention is paid to the so-called  {\it Fekete--Szeg\"o problem}. For a given class $\mathcal{F}$ of analytic functions in $\D$, the Fekete--Szeg\"{o} problem over $\mathcal{F}$ is to find $\sup_{f\in\mathcal{F}} \left| \psi(f,\lambda)\right|$, see, for example, \cite{C-K-S, Ke-Me, Koepf}.

\vspace{2mm}

In the multidimensional settings,  several analogs of the Fekete--Szeg\"o functional have been suggested starting since 2014  (see, for instance, \cite{D-L-22, E-J-22b, Ham2023, HKK2021}, \cite{XL14}--\cite{X-L-Lu23}). To present them more specifically, we need the following notations.
\vspace{2mm}

Let $X$ be a complex Banach space
, $L(X)$ be the space of all bounded linear operators on $X$, and $X^*$ be the dual space of $X$ with pairing $\langle x,a\rangle,\ x\in X, a\in X^*$. We denote by $\B$ the open unit ball in $X$. For each $x \in X$ denote
\begin{equation*}\label{Tx-set}
J(x)=\left\{x^* \in X^*: \|x^*\|=\|x\| \text{ and } \langle x,x^*\rangle=\|x\|^2 \right\}.
\end{equation*}
According to the Hahn--Banach theorem (see, for example, \cite[Theorem~3.2]{Rudin}), the set $J(x)$ is nonempty and may consist of a singleton (for instance, when $X$ is a Hilbert space), or, otherwise, of infinitely many elements. Its elements $x^* \in J(x)$ are called support functionals at the point $x$. 
 
Let $Y$ be another complex Banach space. A mapping ${f:\B\to Y}$  is called holomorphic if it is Fr\'{e}chet differentiable at every point $x_0\in\B$. The set of all holomorphic mappings from $\B$ into $Y$ will be denoted by $\Hol(\B,Y)$. It is well known (see, for example, \cite{E-R-S-19, GK2003, R-Sbook}) that if $f \in \Hol(\B,Y)$, then for every $x_0\in\B$ and  all $x$ in some neighborhood of $x_0 \in \B$, the mapping $f$ can be represented by the series of homogeneous polynomials 
\begin{equation}\label{Taylor-infinite}
f(x) =\sum_{n=0}^{\infty} P_n(x),\quad P_n(x)=\frac{1}{n!} D^nf(x_0)\left[(x - x_0)^n\right],
\end{equation}
where $D^nf(x_0):\prod\limits_{k=1}^{n}X \to Y$ is a bounded symmetric $n$-linear operator which is called the $n$-th  Fr\'{e}chet derivative of $f$ at $x_0$. 
One says that $f\in\Hol(\B,X)$ is {\it normalized} if $f(0)=0$ and $Df(0)=\Id,$ where $\Id\in L(X)$ is the identity operator on $X$. We denote by $\A(\B)$ the subset of $\Hol(\B,X)$ consisting of all normalized mappings.

Another important subclass of $\Hol(\B,X)$ consists of mappings having the form $f(x)=s(x)x,\ s\in\Hol (\B,\C)$. We say that such mappings are \textit{of one-dimensional type}. This subclass has been studied by many mathematicians (see, for example, \cite{D-L-21, ES2004, Lic-86} and references therein). In this connection, the following fact is helpful (for detailed calculation see \cite{E-J-22a}). 
\begin{lemma}\label{lemm-F-S-norm1}
Let $f$  be represented by the series \eqref{Taylor-infinite}.  If $f$ is of one-dimensional type, then for every $j\in\N$, the homogeneous polynomial $P_j$ is  also of one-dimensional type. 
\end{lemma}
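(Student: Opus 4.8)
The plan is to reduce everything to the uniqueness of the expansion into homogeneous polynomials. Writing $f(x)=s(x)x$ with $s\in\Hol(\B,\C)$, I would first expand the scalar function $s$ itself into its series of homogeneous polynomials about the origin,
\[
s(x)=\sum_{k=0}^\infty s_k(x),\qquad s_k(x)=\frac{1}{k!}\,D^k s(0)\bigl[x^k\bigr],
\]
where each $s_k\in\Hol(\B,\C)$ is a scalar-valued homogeneous polynomial of degree $k$. This expansion converges in some neighborhood of the origin, exactly as in \eqref{Taylor-infinite}.

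Next I would multiply this expansion by the identity map $x\mapsto x$ and regroup by degree. Since for each $k$ the mapping $x\mapsto s_k(x)\,x$ is an $X$-valued homogeneous polynomial of degree $k+1$, termwise multiplication gives
\[
f(x)=s(x)\,x=\sum_{k=0}^\infty s_k(x)\,x=\sum_{j=1}^\infty s_{j-1}(x)\,x,
\]
the rearrangement being justified by the convergence of the scalar series in a neighborhood of $0$. Comparing this with the expansion $f(x)=\sum_{j} P_j(x)$ and invoking the uniqueness of the representation by homogeneous polynomials, I would conclude that $P_j(x)=s_{j-1}(x)\,x$ for every $j\in\N$ (and $P_0\equiv0$).

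Finally, since $s_{j-1}$ is a homogeneous polynomial it belongs to $\Hol(\B,\C)$, so the equality $P_j(x)=s_{j-1}(x)\,x$ exhibits $P_j$ in the form $t(x)\,x$ with $t=s_{j-1}\in\Hol(\B,\C)$; that is, $P_j$ is of one-dimensional type, as claimed.

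I do not expect any genuine obstacle here: the only point requiring care is the passage from the product $s(x)\,x$ to its degree-by-degree decomposition, which rests on the uniqueness of the homogeneous-polynomial expansion and on the elementary fact that multiplying a degree-$k$ scalar polynomial by the linear map $x\mapsto x$ raises the degree by exactly one. Everything else is bookkeeping.
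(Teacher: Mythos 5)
Your proof is correct and is essentially the argument the paper intends: the paper states this lemma without including a proof, deferring the ``detailed calculation'' to \cite{E-J-22a}, and that calculation is precisely your route --- expand $s$ into its homogeneous polynomials $s_k$, multiply by $x$, and identify $P_j(x)=s_{j-1}(x)\,x$ by uniqueness of the expansion \eqref{Taylor-infinite}. The one step you flagged, the degree-by-degree identification, is indeed sound, since the homogeneous components are uniquely determined (for instance via the Cauchy integral applied to $\zeta\mapsto f(\zeta x)$ for fixed small $x$), so there is no gap.
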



 A very natural analog of the Fekete--Szeg\"o functional was suggested in~\cite{XL14}. Fix arbitrary $e\in\partial\B$ and $e^*\in J(e)$ and then denote 
\begin{equation}\label{FS-func1}
 \psi_e^{(1)}(f,\lambda):= a_3  - \lambda \left(a_2 \right) ^2,
 \end{equation}
 where
 \begin{equation}\label{aj}
 a_j := \left\langle P_j(e),e^* \right\rangle \quad\mbox{with}\quad P_j(e):=\frac1{j!}D^jf(0)[ e^j ].
 \end{equation}
The estimation of this functional over various classes of mappings (such as starlike, strongly starlike of order $\alpha$, close-to-convex of type B and so on) has been studied in many works (see, for instance, \cite{D-L-22, La-X-21}, \cite{X-L-L-18}--\cite{X-X-18}), where holomorphic mappings $f$ represented by \eqref{Taylor-infinite} were subject to the additional restriction: 
\begin{equation}\label{addi_condi}
\frac12 \left\langle D^2f(0)\left[ x, P_2(x) \right]  ,x^* \right\rangle = \left\langle P_2(x) ,x^* \right\rangle^2,\quad x\in\partial\B,\ x^*\in J(x).
\end{equation}
Note that this condition is obviously satisfied for mappings for which the second Fr\'{e}chet derivative at the origin is of one-dimensional type, see \cite{HKK2022}. It should also be noted that most of the work before 2020 considered mappings of one-dimensional type, or at least assumed condition \eqref{addi_condi} to be hold. 

In \cite{E-J-22a}, the authors obtained sharp estimates on  the norm of the mapping
\begin{equation}\label{FS-oper1}
 \Psi_e^{(1)}(f,\lambda):= P_3  -\frac\lambda2 D^2f(0)[e, P_2(e)],
 \end{equation}
 while the mapping $f$ was yet assumed to be of one-dimensional type.

To  explore wider classes of mappings, the following functionals were introduced in \cite{HKK2021} (see also \cite{HKK22}):
\begin{eqnarray}\label{FS-func2}
\begin{array}{ll}
   \psi_e^{(2)}(f,\lambda)  :=& a_3  - (\lambda-1) \left(a_2 \right) ^2 - \widetilde{a}_2^2, \vspace{1mm}\\
     \psi_e^{(3)}(f,\lambda) :=& a_3  - \left(\lambda-\frac23\right) \left(a_2 \right) ^2 - \frac23\,\widetilde{a}_2^2, \vspace{1mm} \\
     \psi_e^{(4)}(f,\lambda) :=& a_3  - (\lambda-2) \left(a_2 \right) ^2 -2\, \widetilde{a}_2^2,
     \end{array}
\end{eqnarray}
where $a_2,a_3$ are defined by~\eqref{aj} and
\begin{equation*}\label{a_2^2}
 \widetilde{a_2^2}:= \frac12 \left\langle D^2f(0) \left[e, P_2(e) \right] , e^* \right\rangle.
\end{equation*}
There were established sharp estimates on $ \psi_e^{(2)}$ over the class of starlike mappings, on $\psi_e^{(3)}$  over  the class of quasiconvex mappings of type B, and on  $\psi_e^{(4)}$  over  the class of normalized resolvents associated with normalized generators (for this class see \cite{E-R-S-19, GK2003}).

Note that the functionals $\psi_e^{(1)}$\!, $\psi_e^{(2)}$\!,  $\psi_e^{(3)}$  and  $\psi_e^{(4)}$  coincide under condition~\eqref{addi_condi}. 
 
In addition, the functional $\psi_e^{(2)}$  works successfully when we consider spirallike mappings relative to a scalar operator ${A=e^{i\beta}\Id}$. Unfortunately, $\psi_e^{(2)}$  is not suitable when $A$ is not scalar. 
In \cite{E-J-22b}, the authors suggested a modification of $\psi_e^{(2)}$  which enables to involve spirallike mappings relative to any strongly accretive operator $A\in L(X)$. Namely, we denoted $ \psi_e^{A}(f,\lambda)=a_3^A  - (\lambda-1) \left(a_2^A \right) ^2 - \widetilde{a}_2^{2,A}$, where 
 \begin{equation*}\label{a223-renewed}
   \begin{array}{cc}
     a_2^A =& \left\langle D^2f(0)[e,Ae] - AP_2(e),e^*\right\rangle, \hfill \vspace{2mm} \\
     a_3^A =&\frac14 \left\langle D^3f(0)[e^2,Ae] - 2AP_3(e),e^*\right\rangle , \vspace{2mm} \hfill \\
     \widetilde{a}_2^{2,A}  =& \frac12 \left\langle D^2f(0)\left[e,D^2f(0)[e,Ae]\right] - D^2f(0)[e,AP_2(x)]  , e^*\right\rangle .
   \end{array}
 \end{equation*}
 Obviously, in the case of a scalar operator $A$  the functionals $\psi_e^{(2)}$ and   $\psi_e^{A}$  coincide up to multiplication by scalar.
 
\vspace{2mm}
 
In constructions below we use the second Fr\'{e}chet derivative which is  a bi-linear symmetric operator. Therefore the following assertion will be helpful.

\begin{lemma}[Polarization identity]\label{lem-sym}
Each bi-linear symmetric operator $S$ satisfies 
\begin{equation}\label{symmet}
  S[x_1,x_2] = \frac14\left( S[(x_1+x_2)^2]  - S[(x_1-x_2)^2] \right).
\end{equation}  
\end{lemma}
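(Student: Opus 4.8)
The plan is to verify the identity by directly expanding the two quadratic expressions on the right-hand side and exploiting bilinearity together with symmetry. First I would write $S[(x_1+x_2)^2]$ as $S[x_1+x_2,\,x_1+x_2]$ and expand using bilinearity in each slot to obtain the four terms $S[x_1,x_1]+S[x_1,x_2]+S[x_2,x_1]+S[x_2,x_2]$. Invoking the symmetry hypothesis $S[x_1,x_2]=S[x_2,x_1]$ collapses the two cross terms into $2S[x_1,x_2]$, so that $S[(x_1+x_2)^2]=S[x_1,x_1]+2S[x_1,x_2]+S[x_2,x_2]$. The same computation applied to $S[(x_1-x_2)^2]$ yields $S[x_1,x_1]-2S[x_1,x_2]+S[x_2,x_2]$, the only change being the sign of the cross terms.

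The second step is to subtract these two expansions. The diagonal contributions $S[x_1,x_1]$ and $S[x_2,x_2]$ cancel, while the cross terms reinforce, giving
\[
S[(x_1+x_2)^2]-S[(x_1-x_2)^2]=4\,S[x_1,x_2].
\]
Dividing by $4$ produces the claimed identity \eqref{symmet}. I do not anticipate a genuine obstacle here: the statement is a purely algebraic consequence of bilinearity and symmetry, and the only point requiring care is the bookkeeping of the four terms in each expansion and the correct use of symmetry to merge the cross terms. No continuity, boundedness, or finite-dimensionality of the underlying spaces is needed, so the argument is valid verbatim for any bilinear symmetric $S$ regardless of its domain or target.
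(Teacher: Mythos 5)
Your proof is correct: the expansion by bilinearity, the merging of the cross terms via symmetry, and the cancellation upon subtraction are exactly right, and your remark that no continuity or dimensionality assumptions are needed is accurate. The paper states this lemma without proof as a standard fact, so your argument is precisely the routine verification the authors implicitly rely on.
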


\medskip

\section{Fekete--Szeg\"o mapping and its properties}\label{sect-FSM}
\setcounter{equation}{0}


Let $f $ be represented by the series of homogeneous polynomials 
 \begin{equation}\label{series}
   f(x)=x+\sum\limits_{k=2}^\infty P_k(x), 
 \end{equation} 
 which converges in a neighborhood of the origin in $X$, and hence $f$ is holomorphic in this neighborhood. 
 
 We introduce the generalization of the Fekete--Szeg\"o functional, which is the main object of study in this paper.

\begin{defin}\label{def-FS-oper}
Let $f $ be defined by \eqref{series}.  For given $\lambda,\mu\in\C,$ $e \in \partial \B$  and $e^*\in J(e)$, the mapping
\begin{eqnarray*}
   \Psi_e(f,\lambda,\mu) = P_3(e)       - \frac\mu2 D^2f(0)\left[ e, P_2(e) \right] 
           -  (\lambda-\mu) \left\langle P_2(e),e^* \right\rangle P_2(e) 
  \end{eqnarray*}
  will be called  the Fekete--Szeg\"o mapping  for $f$.
 \end{defin}
 
 As for the above-mentioned generalizations of the Fekete--Szeg\"o functional  (see  \eqref{FS-func1}, \eqref{FS-func2} and \eqref{FS-oper1}), they can be expressed in terms $\Psi_e(f,\lambda,\mu)$, namely,  
  \begin{eqnarray}\label{FS-previ}
  \begin{array}{ll}
     \psi_e^{(1)}(f,\lambda)   =&  \left\langle \Psi_e(f,\lambda,0) ,e^*\right\rangle,     \\
          \psi_e^{(2)}(f,\lambda)   =&  \left\langle \Psi_e(f,\lambda,1) ,e^*\right\rangle,  \\
          \psi_e^{(3)}(f,\lambda)   =&  \left\langle \Psi_e\left(f,\lambda, 2/3\right) ,e^*\right\rangle,  \\
           \psi_e^{(4)}(f,\lambda)  =&  \left\langle \Psi_e\left(f,\lambda, 2\right) ,e^*\right\rangle,      \\
          \Psi_e^{(1)}(f,\lambda)   =& \Psi_e(f,\lambda,\lambda).
  \end{array}
 \end{eqnarray}

\vspace{2mm}

Throughout this paper $\lambda,\mu\in\C,$ $e \in \partial \B$  and $e^*\in J(e)$ assumed to be given.
 From now until the end of this section we also assume that $f$ is holomorphic in the open unit ball $\B$.  
 
First we point out specific properties of the Fekete--Szeg\"o mapping over one-dimensional type mappings.
\begin{lemma}\label{lemm-F-S-norm}
Let $f\in\A(\B)$  be represented by the series \eqref{series}.
\begin{itemize}
  \item [(A)] If $P_2$ is of one-dimensional type, that is,  $P_2(x)= s(x)x,$  where $s$ is a homogeneous function, then  $\Psi_e(f,\lambda,\mu)$ does not depend on $\mu$, so $\Psi_e(f,\lambda,\mu) = P_3(e) - \lambda \left\langle P_2(e),e^* \right\rangle P_2(e) $.
  \item [(B)] If both $P_2$  and $P_3$ are of one-dimensional type then 
  \[
     \Psi_e(f,\lambda,\mu)  =  \left\langle  \Psi_e(f,\lambda,\mu), e^*\right\rangle e .    
      \]
\end{itemize}
\end{lemma}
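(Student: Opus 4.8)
The plan is to reduce both assertions to direct substitutions into Definition~\ref{def-FS-oper}, using the one-dimensional-type structure of the low-order polynomials together with two basic facts: that $D^2f(0)[e,e]=2P_2(e)$ (since $P_2(x)=\frac12 D^2f(0)[x,x]$), and that $\langle e,e^*\rangle=\|e\|^2=1$ for $e\in\partial\B$, $e^*\in J(e)$. Note also that the homogeneity degrees are forced: writing $P_2(x)=s(x)x$ with $P_2$ homogeneous of degree $2$ gives $s$ homogeneous of degree $1$, and likewise $P_3(x)=t(x)x$ forces $t$ to be homogeneous of degree $2$.

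For part (A), first I would record that $P_2(e)=s(e)e$, whence $\langle P_2(e),e^* \rangle = s(e)\langle e,e^* \rangle = s(e)$. The only term in $\Psi_e(f,\lambda,\mu)$ that must be unravelled is the bilinear derivative term $D^2f(0)[e,P_2(e)]$. Writing $P_2(e)=s(e)e$ and using bilinearity and symmetry of $S:=D^2f(0)$, I obtain $D^2f(0)[e,P_2(e)] = s(e)\,D^2f(0)[e,e] = 2s(e)P_2(e) = 2\langle P_2(e),e^* \rangle P_2(e)$. Substituting this into the definition, the $\mu$-dependent term becomes $-\frac\mu2\cdot 2\langle P_2(e),e^* \rangle P_2(e) = -\mu\langle P_2(e),e^* \rangle P_2(e)$, which combines with the summand $-(\lambda-\mu)\langle P_2(e),e^* \rangle P_2(e)$ to leave exactly $-\lambda\langle P_2(e),e^* \rangle P_2(e)$. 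The two occurrences of $\mu$ cancel, which simultaneously proves the stated formula and the independence of $\mu$.

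For part (B), I would invoke part (A) to write $\Psi_e(f,\lambda,\mu)=P_3(e)-\lambda\langle P_2(e),e^* \rangle P_2(e)$, and then use that $P_3$ is also of one-dimensional type, say $P_3(x)=t(x)x$, so that $P_3(e)=t(e)e$ and $P_2(e)=s(e)e$. Both summands are then scalar multiples of $e$, whence $\Psi_e(f,\lambda,\mu)=c\,e$ with $c=t(e)-\lambda s(e)^2\in\C$. Pairing against $e^*$ and using $\langle e,e^* \rangle=1$ gives $\langle \Psi_e(f,\lambda,\mu),e^* \rangle = c$, so that $\langle \Psi_e(f,\lambda,\mu),e^* \rangle e = c\,e = \Psi_e(f,\lambda,\mu)$, as claimed.

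There is no genuine obstacle here beyond bookkeeping; the one point that requires care is the correct expansion of the bilinear term $D^2f(0)[e,P_2(e)]$, since this is where the factor $2$ coming from the normalization $P_2=\frac12 D^2f(0)[\cdot,\cdot]$ enters and produces the cancellation of the $\mu$-terms. I would therefore keep explicit track of this normalization throughout the computation.
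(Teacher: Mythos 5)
Your proof is correct and takes essentially the same route as the paper: the key step in both is expanding $D^2f(0)[e,P_2(e)]=s(e)D^2f(0)[e^2]=2\left\langle P_2(e),e^*\right\rangle P_2(e)$ so that the $\mu$-terms cancel in (A), and then observing in (B) that $\Psi_e(f,\lambda,\mu)$ is a scalar multiple of $e$, which pairing with $e^*$ (using $\langle e,e^*\rangle=1$) recovers. Your write-up merely makes explicit the factor-of-two normalization and the final pairing step that the paper leaves implicit.
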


\begin{proof} 
If $P_2(x)= s(x)x,$  then $\left\langle P_2(e),e^* \right\rangle = s (e)$ and hence 
\[
\frac12 D^2f(0)\left[ e, P_2(e) \right] =\frac{s(e)}2 D^2f(0)\left[ e^2 \right] = s(e) P_2(e) = \left\langle P_2(e),e^* \right\rangle P_2(e).
\]
So, assertion (A) follows. 

Similarly, if $P_2$  and $P_3$ are of one-dimensional type then  $\Psi_e(f,\lambda,\mu) =\tilde{s}(e)e$, where $\tilde{s}$ is a scalar-valued polynomial. 
 This proves assertion (B).
\end{proof}

Now we turn on to discuss transformation properties of the Fekete--Szeg\"o mapping $f\mapsto \Psi_e(f,\lambda,\mu)$.

\subsection{Generalized $n$-th root transform}

In the one-dimensional case the $n$-th root transform of $f\in\A(\D)$ is defined by
\[
g(z):=\sqrt[n]{f(z^n)} :=\sqrt[n]{p(z^n)}\,z,\quad \mbox{where }\ p(z)=\frac{f(z)}{z},
\]
and the branch of the root is chosen such that $\sqrt[n]{p(0)}=1$.

If $f$ has the Taylor expansion \eqref{taylor}, then $g(z)=z+\sum\limits_{m=1}^\infty b_{nm+1}z^{mn+1}$  with $b_{n+1}=\frac{1}{n}a_2$ and $b_{2n+1}=\frac1n\, \psi(f,\frac{n-1}{2n})$, where $\psi$ is defined by \eqref{FS-func}. 

\vspace{2mm}

To discuss possible infinite-dimensional analogs of these relations, note that for $x \in \B$ one has 
\begin{eqnarray*}
   f(x)  &=& f(x)- f(0)=\int_0^1 \frac{d}{dt}f(tx) dt \\
     &=&  \int_0^1 Df(tx)xdt =\left( \int_0^1 Df(tx) dt  \right) x. 
  \end{eqnarray*}
  Thus the following auxiliary assertion holds:
\begin{lemma}\label{lem-0}
  Let $f\in\A(\B)$. Then there exists a (not unique) mapping $ \kappa\in\Hol(\B,L(X))$ such that $f(x)=\kappa(x)x$ and $\kappa(0)=\Id$.  
\end{lemma}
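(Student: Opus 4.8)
The plan is to read off the candidate directly from the integral computation displayed just before the statement: set
\[
\kappa(x):=\int_0^1 Df(tx)\,dt .
\]
For each fixed $x\in\B$ the integrand $t\mapsto Df(tx)$ is a continuous $L(X)$-valued function on $[0,1]$, so the integral is a well-defined element of $L(X)$, and the identity $f(x)=\kappa(x)x$ is exactly that computation. The normalization of $f\in\A(\B)$ gives $Df(0)=\Id$, whence $\kappa(0)=\int_0^1\Id\,dt=\Id$. Thus $\kappa$ satisfies the two required algebraic conditions, and it only remains to check that $\kappa\in\Hol(\B,L(X))$.

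For the holomorphy I would avoid differentiating under the integral sign and instead pass to the homogeneous expansion. Writing $f(x)=x+\sum_{k=2}^\infty P_k(x)$, the Fr\'echet derivative is $Df(x)=\Id+\sum_{k=2}^\infty DP_k(x)$, where each $DP_k$ is an $L(X)$-valued homogeneous polynomial of degree $k-1$; in particular $DP_k(tx)=t^{k-1}DP_k(x)$. Substituting and integrating term by term gives
\[
\kappa(x)=\int_0^1\Bigl(\Id+\sum_{k=2}^\infty t^{k-1}DP_k(x)\Bigr)dt=\Id+\sum_{k=2}^\infty\frac1k\,DP_k(x),
\]
which exhibits $\kappa$ as a series of $L(X)$-valued homogeneous polynomials. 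Since this series is obtained from the (locally uniformly convergent) expansion of $Df$ by dividing the $k$-th term by $k$, it converges locally uniformly on the same neighborhood of the origin, and therefore defines a holomorphic $L(X)$-valued mapping there; this is precisely what $\kappa\in\Hol(\B,L(X))$ means.

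The only point requiring care --- the main, and rather minor, obstacle --- is the justification of the term-by-term integration, i.e. the interchange of $\int_0^1$ with the infinite sum. This is legitimate because on a closed ball contained in the domain of convergence the partial sums of the expansion of $Df$ converge uniformly in both $t\in[0,1]$ and $x$; alternatively one may deduce holomorphy via weak holomorphy, noting that $x\mapsto\langle\kappa(x)y,y^*\rangle$ is a scalar holomorphic function for every $y\in X$ and $y^*\in X^*$, together with the local boundedness of $\kappa$.

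Finally, the parenthetical non-uniqueness needs only a remark rather than a proof: if $B\in\Hol(\B,L(X))$ satisfies $B(0)=0$ and $B(x)x\equiv 0$, then $\kappa+B$ is another admissible choice. Such nonzero $B$ exist as soon as $\dim X\ge 2$; for instance, in $X=\C^2$ the operator-valued map $B(x)=\left(\begin{smallmatrix}-x_2&x_1\\0&0\end{smallmatrix}\right)$ is holomorphic, vanishes at the origin, and annihilates $x$.
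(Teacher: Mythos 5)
Your proposal is correct and follows exactly the paper's own route: the paper ``proves'' Lemma~\ref{lem-0} by the same integral computation $f(x)=\bigl(\int_0^1 Df(tx)\,dt\bigr)x$ displayed just before the statement, with $\kappa(x)=\int_0^1 Df(tx)\,dt$ as the implicit choice. Your added verifications (holomorphy of $\kappa$ via the expansion $\kappa(x)=\Id+\sum_{k\ge2}\frac1k DP_k(x)$ or weak holomorphy plus local boundedness, and the explicit non-uniqueness example in $\C^2$) are sound and merely make explicit what the paper leaves tacit.
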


Given any $f\in\A(\B)$ and some appropriate $\kappa$, there is $\rho\in(0,1]$ such that the union of the numerical ranges
\[
\bigcup_{\|x\|<\rho} \left\{\left\langle\kappa(x) y,y^* \right\rangle,\ \|y\|=1,\ y^*\in J(y)  \right\}
\]
does not separate zero and infinity. Therefore the powers $\kappa^\alpha(x)$, $\alpha>0,$ are well-defined holomorphic mappings 
on the ball $\|x\|<\rho$ and satisfy  $\kappa^\alpha(0)=\Id$. Sometimes $\rho$  can be chosen to be $1$. It happens, for example, when 
(i)~$X$ is a Hilbert space and $\Re\kappa(x)>0$ for all $x\in\B$, when (ii) $\kappa$ is a scalar operator, $\kappa(x)=s(x)\Id$   and $s(\B)$  does not separate zero and infinity in $\C$, and in many other cases.

Let $e\in \partial \B$ and $f\in\A(\B)$ be represented as in Lemma~\ref{lem-0}. The mapping
\begin{eqnarray}\label{sq-root}
g(x):=\left( \kappa\left(\langle x,e^*\rangle^n e\right)\right)^{\frac1n}\,x, \qquad x\in\B_\rho,
\end{eqnarray}
will be called the $n$-th root transform of $f$. Although $g$ depends on the choice of~$\kappa$, every $n$-th root transform of $f$ has the following properties.

\begin{theorem}\label{thm-FS-th}
Let $f\in\A(\B)$ be represented by \eqref{series} and  have the form $f(x)=\kappa(x)x$ with $\kappa\in\Hol(\B, L(x))$. Assume that $P_2(e)=\nu e$  for some ${\nu \in \C}$. Let $g$ be defined by \eqref{sq-root} and represented by the series of homogeneous polynomials $g(x)=x+\sum\limits_{k=2}^\infty Q_k(x)$.

Then $Q_{n+1}(e)=\frac{1}{n}P_2(e)$, $Q_{2n+1}(e)=\frac1n\, \Psi_e(f,\frac{n-1}{2n}, \mu)$  for any $\mu \in \C$, and  $Q_k(x)=0$ whenever $k\neq mn+1,\ m\in\N$.
\end{theorem}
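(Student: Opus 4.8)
The plan is to compute the homogeneous polynomial expansion of the $n$-th root transform $g(x) = \bigl(\kappa(\langle x,e^*\rangle^n e)\bigr)^{1/n} x$ and extract the coefficients $Q_k(e)$ directly, exploiting the special scalar structure that the substitution $x \mapsto \langle x,e^*\rangle^n e$ forces along the direction $e$. First I would set up notation for the expansion of $\kappa$. Since $f(x)=\kappa(x)x$ with $\kappa(0)=\Id$, I would write $\kappa(y) = \Id + K_1(y) + K_2(y) + \cdots$, where each $K_j$ is an $L(X)$-valued homogeneous polynomial of degree $j$, and relate these to the $P_k$ via $P_k(x) = K_{k-1}(x)x$; in particular $K_1(x)x = P_2(x)$ and $K_2(x)x = P_3(x)$. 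The key observation is that when we substitute $y = \langle x,e^*\rangle^n e$, every occurrence of $\kappa$ is evaluated at a \emph{scalar multiple of the fixed vector} $e$, namely $y = t^n e$ with $t=\langle x,e^*\rangle$. Because $K_j$ is homogeneous of degree $j$, we get $K_j(t^n e) = t^{nj} K_j(e)$, where $K_j(e):=K_j[e^j]$ is a single fixed operator in $L(X)$. Therefore
\[
\kappa(\langle x,e^*\rangle^n e) = \Id + \sum_{j=1}^\infty \langle x,e^*\rangle^{nj} K_j(e),
\]
so the entire argument reduces to computing a formal $1/n$-th power of the operator-valued \emph{function of the single scalar} $s=\langle x,e^*\rangle^n$. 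This immediately explains why $Q_k(x)=0$ unless $k\equiv 1 \pmod n$: the only powers of $\langle x,e^*\rangle$ appearing are multiples of $n$, and multiplying by the trailing $x$ shifts the degree by one.

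Next I would carry out the binomial expansion of $\bigl(\Id + \sum_{j\ge 1} s^{j} K_j(e)\bigr)^{1/n}$ in powers of $s = \langle x,e^*\rangle^n$, keeping terms through $s^2$. Writing the expansion as $\Id + c_1 s + c_2 s^2 + \cdots$, the standard $(1+u)^{1/n}$ series gives $c_1 = \frac1n K_1(e)$ and $c_2 = \frac1n K_2(e) + \frac1n\bigl(\frac1n-1\bigr)\frac12 K_1(e)^2 = \frac1n K_2(e) + \frac{1-n}{2n^2} K_1(e)^2$. Here I must be careful that the operators $K_1(e)$ and $K_2(e)$ need not commute, but since only a single operator $K_1(e)$ is squared at this order, the noncommutativity causes no ambiguity through degree $2$. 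Multiplying by the trailing $x$ and then evaluating the resulting homogeneous polynomials at $x=e$ (so $\langle e,e^*\rangle = 1$), the coefficient of total degree $n+1$ gives $Q_{n+1}(e) = c_1 e = \frac1n K_1(e)e = \frac1n P_2(e)$, which is the first claimed identity. The coefficient of total degree $2n+1$ gives $Q_{2n+1}(e) = c_2 e = \frac1n K_2(e)e + \frac{1-n}{2n^2} K_1(e)^2 e$.

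It then remains to identify $c_2 e$ with $\frac1n \Psi_e(f,\tfrac{n-1}{2n},\mu)$. The first term is $\frac1n K_2(e)e = \frac1n P_3(e)$. For the second term I would use the hypothesis $P_2(e)=\nu e$, i.e.\ $K_1(e)e = \nu e$, which forces $K_1(e)^2 e = K_1(e)(\nu e) = \nu K_1(e)e = \nu^2 e = \nu P_2(e) = \langle P_2(e),e^*\rangle P_2(e)$, since $\nu = \langle P_2(e),e^*\rangle$. Hence
\[
Q_{2n+1}(e) = \frac1n P_3(e) + \frac{1-n}{2n^2}\,\langle P_2(e),e^*\rangle P_2(e) = \frac1n\Bigl(P_3(e) - \tfrac{n-1}{2n}\,\langle P_2(e),e^*\rangle P_2(e)\Bigr).
\]
Finally I would check this equals $\frac1n\Psi_e(f,\tfrac{n-1}{2n},\mu)$ for every $\mu$: under the assumption $P_2(e)=\nu e$ one has $\frac12 D^2 f(0)[e,P_2(e)] = \nu \cdot \frac12 D^2f(0)[e^2] = \nu P_2(e) = \langle P_2(e),e^*\rangle P_2(e)$, so in Definition~\ref{def-FS-oper} the $-\frac\mu2 D^2f(0)[e,P_2(e)]$ term and the $\mu$ part of the $-(\lambda-\mu)\langle P_2(e),e^*\rangle P_2(e)$ term cancel, leaving $\Psi_e(f,\lambda,\mu) = P_3(e) - \lambda\langle P_2(e),e^*\rangle P_2(e)$ independent of $\mu$ (this is exactly Lemma~\ref{lemm-F-S-norm}(A) specialized to the direction $e$). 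Setting $\lambda = \tfrac{n-1}{2n}$ matches the coefficient above, completing the proof. The main obstacle I anticipate is purely bookkeeping: correctly organizing the binomial coefficients in the operator $1/n$-th power so that the degree-$2n+1$ coefficient is assembled from exactly the right contributions, and making transparent that the $P_2(e)=\nu e$ hypothesis is what collapses the operator products into the scalar multiple of $P_2(e)$ demanded by the Fekete--Szeg\"o mapping.
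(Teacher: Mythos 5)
Your proposal is correct and follows essentially the same route as the paper: expand $\kappa$ into $L(X)$-valued homogeneous polynomials, observe that the substitution $x\mapsto\langle x,e^*\rangle^n e$ reduces everything to a binomial series in the scalar $\langle x,e^*\rangle^n$ with fixed operator coefficients $K_j(e)$, and read off $Q_{n+1}$ and $Q_{2n+1}$. You are in fact slightly more explicit than the paper at the final step, spelling out how $P_2(e)=\nu e$ collapses $K_1(e)^2e$ to $\langle P_2(e),e^*\rangle P_2(e)$ and why $\Psi_e(f,\lambda,\mu)$ is then $\mu$-independent, which the paper leaves as ``the result follows by Definition~\ref{def-FS-oper}.''
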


\begin{proof}
Consider the series of homogeneous polynomials of $\kappa$:
\begin{equation*}
\kappa(x)=\Id+\sum_{k=1}^{\infty}p_k(x), \quad\text{so that}\quad p_k\in\Hol(\B,L(X)),\  P_{k+1}(x)=p_k(x)x.
\end{equation*}
Denote $A_k=p_k(e)\in L(X)$. Then
\begin{equation*}
\kappa\left(\langle x,e^*\rangle^n e\right)=\Id + \sum_{k=1}^{\infty} p_k\left(\langle x,e^*\rangle^n e\right) 
= \Id+\sum_{k=1}^{\infty}  \langle x,e^*\rangle^{nk} A_k.
\end{equation*}

Using the binomial series we get
\begin{eqnarray*}\label{g-calc}
g(x)&:=& \left(  \Id+\sum_{k=1}^{\infty}  \langle x,e^*\rangle^{nk} A_k   \right) ^{\frac1n} \, x \\
\nonumber &=& \left[ \Id+ \sum_{m=1}^\infty \left( \begin{array}{c}
                                                   1/n \\
                                                   m
                                                 \end{array}
\right) \left( \sum_{k=1}^{\infty}  \langle x,e^*\rangle^{nk} A_k \right)^{\!m}    \right] x\\
\nonumber &=&  \left[ \Id + \frac1n \sum_{k=1}^{\infty}  \langle x,e^*\rangle^{nk}  A_k + \frac{1-n}{2n^2} \left( \sum_{k=1}^{\infty}
 \langle x,e^*\rangle^{nk}  A_k \right)^2+  \ldots      \right] x                                \\
\nonumber &=& x+\frac{1}{n}  \langle x,e^*\rangle^{n} A_1 x+\frac{1}{n}  \langle x,e^*\rangle^{2n} A_2 x + \frac{1-n}{2n}
 \langle x,e^*\rangle^{2n} A_1^2 x + \ldots.
\end{eqnarray*}
This calculation shows that $Q_k(x)=0$ whenever $k\neq mn+1,\ m\in\N$, while
\begin{eqnarray*}\label{g-tay}
&&Q_1(x)=x,\quad Q_{n+1}(x)=\frac{1}{n}   \langle x,e^*\rangle^{n} A_1 x,\\
\nonumber &&Q_{2n+1}(x)=\frac{1}{n} \left(A_2  + \frac{1-n}{2n} A_1^2\right)  \langle x,e^*\rangle^{2k}  x.
\end{eqnarray*}
Thus  the result follows by Definition~\ref{def-FS-oper}.  
\end{proof}

The root transform in the one-dimensional case is interesting because the univalence of $f\in \A(\D)$ implies the univalence of its root transform $g$. For the multi-dimensional case, we do not know whether this implication remains true.  We prove this implication for mappings of one-dimensional type.  
\begin{theorem}\label{thm-root-tr-univ}
Let 
 $f(x)=s(x)x$, $s\in \Hol(\B,\C)$  and $s(\B)$  does not separate zero and infinity.
Let $g$ be the $n$-th root transform of $f$ defined by 
\begin{equation*}
g(x)=\left( s\left(\langle x,e^*\rangle^n e\right)\right)^{\frac1n}\,x.
\end{equation*}
If $f$ is univalent in $\B$, then $g$ also is.
\end{theorem}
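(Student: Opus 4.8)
The plan is to reduce the multidimensional univalence problem to the one-dimensional scalar situation by exploiting the special structure of mappings of one-dimensional type. Since $f(x)=s(x)x$, the root transform takes the explicit form $g(x)=\sigma(x)x$ where $\sigma(x)=\bigl(s(\langle x,e^*\rangle^n e)\bigr)^{1/n}$, so $g$ is again of one-dimensional type. First I would observe that injectivity of $g$ on $\B$ is governed entirely by how the scalar function $\sigma$ interacts with the rays through the origin: if $g(x)=g(y)$, then $\sigma(x)x=\sigma(y)y$, and since $\sigma$ never vanishes (because $s(\B)$ does not separate zero and infinity, so $\sigma^n=s$ is zeroless on the relevant set), the vectors $x$ and $y$ must be positively proportional, say $y=tx$ with $t>0$ and $\|x\|=1$ after normalizing.

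The key step is then to restrict attention to a single complex line. Fix a unit vector $x$ and consider the scalar function of one complex variable obtained by looking at $s$ along the image curve $z\mapsto \langle zx,e^*\rangle^n e = \langle x,e^*\rangle^n z^n e$. The hypothesis that $f$ is univalent in $\B$ forces the associated one-dimensional map $z\mapsto s(z x) z$ (or more precisely the map along the distinguished direction $e$) to be univalent in $\D$, and the classical one-dimensional theorem guarantees that its $n$-th root transform $z \mapsto (s(z^n e))^{1/n} z$ is univalent in $\D$ as well. The technical heart is to transfer this scalar univalence back: I would show that $g(x)=g(y)$ with $y=tx$ reduces, upon evaluating the pairing with $e^*$ and tracking the factor $\langle x,e^*\rangle^n$, to an equality forcing $t=1$, which is exactly the one-dimensional injectivity statement applied along the correct line.

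The main obstacle I anticipate is handling the dependence on $\langle x,e^*\rangle$ carefully, since the root transform $g$ is \emph{not} rotationally symmetric: it privileges the direction $e$ through the functional $e^*$. When $\langle x,e^*\rangle=0$ the mapping degenerates to $g(x)=s(0)^{1/n}x=x$ along that slice, which must be treated separately but is trivially injective there; the substantive case is $\langle x,e^*\rangle\neq 0$. One must verify that the one-dimensional univalence extracted from $f$ genuinely applies to the scalar function $z\mapsto (s(z^n e))^{1/n}$ appearing in $g$, rather than to some other restriction of $s$. I would resolve this by choosing the complex line spanned by $x$ and examining the composition with the scalar change of variable $z\mapsto \langle x,e^*\rangle^n z^n$, which maps $\D$ into the domain where the branch of the root is defined and where $s$ is univalent along $e$.

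Once injectivity is established, I would finish by noting that $g$ is holomorphic on $\B_\rho$ (equal to $\B$ here, since for one-dimensional type with $s(\B)$ not separating zero and infinity one may take $\rho=1$ as remarked after Lemma~\ref{lem-0}), so that $g$ is a holomorphic injection and hence univalent in the required sense. I expect the proof to be short once the reduction to a single line is set up correctly; the delicate point is purely bookkeeping the nonhomogeneous factor $\langle x,e^*\rangle^n$ and confirming it does not obstruct the transfer of the scalar Fekete--Szeg\"o-type univalence.
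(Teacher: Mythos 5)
Your proposal follows essentially the same route as the paper's proof: from $g(x_1)=g(x_2)$ and the one-dimensional-type structure deduce proportionality of the two points, pair with $e^*$ to reduce to a one-variable statement along the distinguished direction $e$, and treat the degenerate slice $\langle x,e^*\rangle=0$ separately using $s(0)=1$. One genuine, though easily repaired, misstep: from $\sigma(x)x=\sigma(y)y$ with $\sigma$ nonvanishing you may only conclude $y=\gamma x$ for some \emph{nonzero complex} $\gamma=\sigma(x)/\sigma(y)$; your claim that $x$ and $y$ must be \emph{positively} proportional ($y=tx$, $t>0$) is false, and taken literally your case analysis would miss, say, $y=ix$. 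Fortunately nothing downstream uses positivity: pairing with $e^*$ yields $\tilde g(w)=\tilde g(\gamma w)$ with $w=\langle x,e^*\rangle$ for arbitrary complex $\gamma$, so the dichotomy $\gamma=1$ or $w=0$ survives verbatim once you replace $t>0$ by $\gamma\in\C\setminus\{0\}$. (Also, you cannot ``normalize $\|x\|=1$'' for points of the open ball; just keep $x\in\B$.)

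The remaining difference is packaging. You invoke the classical one-dimensional theorem as a black box: univalence of $f$ on the line $\C e$ gives univalence of $\phi(z)=s(ze)z$ in $\D$, hence of its $n$-th root transform $\tilde g(z)=\left(s(z^ne)\right)^{1/n}z$, which forces $w=\gamma w$. The paper is self-contained at this point: it raises the paired identity to the $n$-th power, obtaining $f(\langle x_1,e^*\rangle e)=f(\langle \gamma x_1,e^*\rangle e)$, and applies the univalence of $f$ directly. Since raising to the $n$-th power is precisely how the classical one-dimensional result is proved, the two arguments amount to the same computation; the paper's version merely avoids the external citation. With the complex-$\gamma$ correction, your plan is complete and matches the paper's proof in substance.
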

\begin{proof}
It follows from the discussion after Lemma~\ref{lem-0}  that $g$ is holomorphic in $\B$. 
Take $x_1,\, x_2 \in \B$ and assume that $g(x_1)=g( x_2) $, then
\begin{eqnarray*}
\left( s\left(\langle x_1,e^*\rangle^n e\right)\right)^{\frac1n}\,x_1 &=& \left( s\left(\langle x_2,e^*\rangle^n e\right)\right)^{\frac1n}\,x_2. 
\end{eqnarray*}
Thus there exists $\gamma \in \C$ such that $x_2=\gamma x_1$. Hence,
\begin{eqnarray*}
\left( s\left(\langle x_1,e^*\rangle^n e\right)\right)^{\frac1n}\,\langle x_1,e^*\rangle  &=& \left( s\left(\langle \gamma x_1,e^*\rangle^n e\right)\right)^{\frac1n}\,\gamma \langle x_1, e^*\rangle . 
\end{eqnarray*}
Raising both sides to the $n$-th power, we conclude that
\begin{eqnarray*}
 s\left(\langle x_1,e^*\rangle^n e\right)\,  \langle x_1,e^*\rangle ^n e =  s\left(\langle \gamma x_1,e^*\rangle^n e\right)\,\langle \gamma x_1, e^*\rangle^n e,
\end{eqnarray*}
which is equivalent to $f(\langle x_1,e^*\rangle e)=f(\langle \gamma x_1,e^*\rangle e)$. Since $f$ is univalent, then $\langle x_1,e^*\rangle =\gamma \langle  x_1,e^*\rangle$. 

This means that either $\langle x_1,e^*\rangle=0$ or $\gamma=1$.
If $\langle x_1,e^*\rangle=0$  then  
$\left( s\left(0\right)\right)^\frac{1}{n}\,x_1 = \left( s\left(0\right)\right)^\frac{1}{n}\,\gamma x_1,$  with   $s\left(0\right)=1.$ Therefore, in both cases $x_1=x_2$, and the proof is complete.
\end{proof}

\vspace{2mm}

\subsection{Unitary transform}
Once again let begin with  $f\in \A(\D)$ represented by \eqref{taylor}. Given  some ${\theta \in \R}$, denote $u=\exp\left(i \theta \right)$. The rotation transform of $f$ is defined by $g(z):=\overline{u} f(uz)$.  Writing $g(z)=z+\sum\limits_{n=2}^{\infty}b_n z^n$, we get $b_n=a_nu^{n-1}$,  $n \in \N$, and
\[
\psi(g,\lambda)=u^2 \psi(f,\lambda),
\]
where $\psi$ is the Fekete--Szeg\"o functional defined by \eqref{FS-func}.

\vspace{2mm}

Let  $X$ be  a Hilbert space. Then $e^*\in J(e)$ can be identified with $e$.   For a given unitary operator  $U \in L(X)$, the unitary transform of $f$ is defined  by
\begin{equation*}
g=U^*f\circ U.
\end{equation*}

\begin{theorem}\label{thm-rotation}
  Let $f \in \A(\B)$ and $g$ be its unitary transform. \\ Then  $\Psi_e(g,\lambda,\mu)= U^* \Psi_{\tilde{e}}(f,\lambda,\mu)$ with $\tilde{e}=Ue$.
\end{theorem}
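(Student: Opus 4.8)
The plan is to compute $\Psi_e(g,\lambda,\mu)$ directly from Definition~\ref{def-FS-oper} by expressing the homogeneous polynomials of $g=U^*f\circ U$ in terms of those of $f$ and then collecting terms. The first step is to record how the Fr\'{e}chet derivatives transform under pre- and post-composition with linear operators. Since $U$ is linear, the chain rule gives $D^k g(0)[x_1,\dots,x_k]=U^* D^k f(0)[Ux_1,\dots,Ux_k]$ for every $k$; in particular the homogeneous polynomials satisfy $Q_k(x)=U^* P_k(Ux)$, where $g(x)=x+\sum_{k\ge2}Q_k(x)$. I would verify that $g$ is indeed normalized, i.e. $Dg(0)=\Id$, which follows from $U^*\Id\,U=U^*U=\Id$ because $U$ is unitary.

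Next I would substitute these relations into the three terms of $\Psi_e(g,\lambda,\mu)$ and track the base point $e$ through $U$. Writing $\tilde e=Ue$, the cubic term becomes $Q_3(e)=U^*P_3(Ue)=U^*P_3(\tilde e)$. The derivative term becomes
\[
\frac\mu2 D^2g(0)\left[e,Q_2(e)\right]=\frac\mu2 U^* D^2f(0)\left[Ue,\,U Q_2(e)\right]=\frac\mu2 U^* D^2f(0)\left[\tilde e,\,P_2(\tilde e)\right],
\]
where I used $U Q_2(e)=U U^* P_2(Ue)=P_2(\tilde e)$. For the third term I need $\langle Q_2(e),e^*\rangle$; since $X$ is a Hilbert space and $e^*$ is identified with $e$, this pairing is an inner product, so $\langle Q_2(e),e\rangle=\langle U^*P_2(\tilde e),e\rangle=\langle P_2(\tilde e),Ue\rangle=\langle P_2(\tilde e),\tilde e\rangle$, using that $U^*$ is the Hilbert-space adjoint of $U$. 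Thus $(\lambda-\mu)\langle Q_2(e),e^*\rangle\,Q_2(e)=(\lambda-\mu)\langle P_2(\tilde e),\tilde e^*\rangle\,U^*P_2(\tilde e)$ with $\tilde e^*\in J(\tilde e)$ identified with $\tilde e$.

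Finally I would factor $U^*$ out of all three assembled terms. Because $U^*$ is linear, the whole expression collapses to
\[
\Psi_e(g,\lambda,\mu)=U^*\!\left(P_3(\tilde e)-\frac\mu2 D^2f(0)[\tilde e,P_2(\tilde e)]-(\lambda-\mu)\langle P_2(\tilde e),\tilde e^*\rangle\,P_2(\tilde e)\right)=U^*\Psi_{\tilde e}(f,\lambda,\mu),
\]
which is exactly the claim. The one genuinely delicate point—the only place where the Hilbert-space hypothesis and the identification of $J(e)$ with a singleton are really used—is the pairing computation for the third term: I must confirm that the support functional $e^*$ at $e$ transports correctly to a support functional $\tilde e^*$ at $\tilde e$ under the adjoint, so that $\langle U^*y,e\rangle=\langle y,Ue\rangle$ is legitimately an adjoint relation and not merely formal. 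In a general Banach space the duality set $J(e)$ need not be a singleton, so this step would require choosing compatible support functionals; the unitary (hence isometric) structure is what guarantees $U$ maps $J(e)$ bijectively onto $J(Ue)$ and makes the identification unambiguous.
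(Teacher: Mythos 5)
Your proposal is correct and follows the same overall computation as the paper: express the homogeneous polynomials of $g=U^*f\circ U$ as $Q_k(x)=U^*P_k(Ux)$, substitute into Definition~\ref{def-FS-oper}, and factor $U^*$ out of the three terms. The one genuine difference lies in how the mixed term $D^2g(0)[e,Q_2(e)]$ is evaluated. The paper records only the diagonal identity $\frac{1}{k!}D^kg(0)[x^k]=U^*P_k(Ux)$ and then recovers the off-diagonal bilinear value via the polarization identity (Lemma~\ref{lem-sym}), writing $D^2g(0)\left[e,U^*P_2(Ue)\right]=\frac12 U^*\bigl[P_2\bigl(Ue+P_2(Ue)\bigr)-P_2\bigl(Ue-P_2(Ue)\bigr)\bigr]$ before reassembling. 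You instead invoke the chain rule for composition with the linear maps $U$ and $U^*$ to obtain the full multilinear transformation law $D^kg(0)[x_1,\dots,x_k]=U^*D^kf(0)[Ux_1,\dots,Ux_k]$, which yields the mixed term in one line through $UQ_2(e)=UU^*P_2(\tilde e)=P_2(\tilde e)$; this bypasses Lemma~\ref{lem-sym} entirely and is arguably cleaner. You also make explicit two points the paper leaves implicit: the normalization check $Dg(0)=U^*U=\Id$, and the adjoint computation $\langle U^*y,e\rangle=\langle y,Ue\rangle$ together with the transport of the support functional $e^*$ to $\tilde e^*\in J(\tilde e)$, which is precisely where the Hilbert-space hypothesis (so that $J(e)$ is the singleton $\{e\}$ and $U$ carries it to $J(Ue)$) enters. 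Both arguments are valid; the paper's version reuses a lemma it states anyway (polarization also serves in the proof of Theorem~\ref{lemm-compos}), while yours is self-contained at this step and makes the role of unitarity more transparent.
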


\begin{proof}
According to the definition of the unitary transform 
\begin{eqnarray*}
g(x)= U^*\left(Ux+\sum\limits_{k=2}^\infty P_k(Ux)\right)=x+\sum\limits_{k=2}^\infty U^*P_k(Ux).
\end{eqnarray*}
Then for any $x\in\partial\B$ and $k\geq 2$ we have
\begin{eqnarray}\label{deriv}
\frac{1}{k!}D^k g(0)[x^k]=U^*P_k(Ux).
\end{eqnarray}

Let us now calculate $\Psi_e(g,\lambda,\mu)$: 
  \begin{eqnarray*}
    U\Psi_e(g,\lambda,\mu)=P_3(Ue) - \frac{\mu}2 U D^2g(0)\left[ e, U^*P_2(Ue)\right]   \\
            - (\lambda-\mu)\left\langle P_2(Ue) , Ue \right\rangle P_2(Ue) .
  \end{eqnarray*}

  In addition, formulas \eqref{deriv} and \eqref{symmet}  imply  
  \[
   D^2g(0)\left[ e, U^*P_2(Ue) \right] = \frac12 U^* \left[P_2(Ue + P_2(Ue)  ) -P_2(Ue - P_2(Ue)  )  \right].
  \]

 Denoting $\tilde{e}=Ue$ 
  we have
   \begin{eqnarray*}
    U\Psi_e(g,\lambda,\mu) &=& P_3(\tilde{e}) -(\lambda-\mu) \left\langle P_2(\tilde{e}) , \tilde{e} \right\rangle P_2(\tilde{e}) \\
     &-& \frac{\mu}4 \left[P_2(\tilde{e} + P_2(\tilde{e})  ) -P_2(\tilde{e} - P_2(\tilde{e})  )  \right] \\
     &=& P_3(\tilde{e}) - (\lambda-\mu)\left\langle P_2(\tilde{e}) , \tilde{e} \right\rangle P_2(\tilde{e})   - \frac{\mu}2 D^2f(0)\left[\tilde{e},  P_2(\tilde{e}) \right],
  \end{eqnarray*}
as desired.
\end{proof}

\medskip

\section{Composition of mappings and inverse mappings}\label{sect-compos}
\setcounter{equation}{0}

Let two functions $f$ and $g$ be holomorphic in a neighborhood of $0\in\C$ and represented by the Taylor series $f(z)=z+\sum\limits_{n=2}^{\infty}a_n z^n$ and $g(z)=z+\sum\limits_{n=2}^{\infty}b_n z^n$. Assume that $(f \circ g)(z)  =z+\sum\limits_{n=2}^{\infty}c_n z^n$. Then we have
\begin{equation*}
c_2=a_2+b_2, \quad c_3= a_3 +2a_2b_2+b_3,
\end{equation*}
and
\begin{equation}\label{psi-compo-1dim}
\psi(f\circ g,\lambda)= \psi(f,\lambda) + \psi(g,\lambda) - 2(\lambda-1)a_2b_2,
\end{equation}
where $\psi$ is the Fekete--Szeg\"o functional defined by \eqref{FS-func}. Moreover, if $g=f^{-1}$, then $c_2=c_3=0,$ which implies
\begin{equation*}
b_2=-a_2, \quad b_3=-(a_3-2a_2^2)=-\psi(f,2).
\end{equation*}
In turn, it follows from these relations that
\begin{equation*}
\psi(g,\lambda)=-\psi(f,2-\lambda).
\end{equation*}

\vspace{2mm}
In this section we assume that $f$ and $g $ are holomorphic in a neighborhood of the origin in a Banach space $X$ and represented there by the series of homogeneous polynomials 
\begin{equation}\label{series-2}
f(x)=x+  \sum\limits_{k=2}^\infty P_k(x)\quad\mbox{and}\quad       
g(x)=x+\sum\limits_{k=2}^\infty Q_k(x).
\end{equation}

\subsection{Composition formula and its applications}
This subsection is devoted to infinite-dimensional analogs of \eqref{psi-compo-1dim} and some close relations. They are based on the next result.
\begin{theorem}\label{lemm-compos}
 Let $f$ and $g$ be represented by the series \eqref{series-2}. Then 
\begin{eqnarray}\label{Psi-compo}
 \nonumber   \Psi_e(f\circ g,\lambda,\mu)  &\!\!=\!\!&  \Psi_e( f , \lambda, \mu)  + 
                       \Psi_e(  g, \lambda, \mu) \\
\nonumber  &\!\!-\!\!&   (\lambda-\mu) \left[ \left\langle P_2(e) ,e^* \right\rangle        Q_2(e)  + \left\langle Q_2(e),e^* \right\rangle P_2(e) \right]    \\
&\!\!-\!\!& \mu C[e, P_2(e)]   -  (\mu-2) B[e,Q_2(e)] ,
\end{eqnarray}
 where $B:=\frac12 D^2f(0)$ and  $C:=\frac12 D^2g(0)$.
\end{theorem}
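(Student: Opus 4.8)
The plan is to compute the homogeneous polynomial expansion of the composition $f\circ g$ up to order three and then read off the Fekete--Szeg\"o mapping directly from Definition~\ref{def-FS-oper}. Writing $f(x)=x+\sum_{k\ge2}P_k(x)$ and $g(x)=x+\sum_{k\ge2}Q_k(x)$, the first step is to substitute the series for $g$ into that for $f$ and collect terms. If we denote the homogeneous polynomials of $f\circ g$ by $R_k$, then a routine but careful bookkeeping gives $R_2(x)=P_2(x)+Q_2(x)$ and, for the cubic part,
\begin{equation*}
R_3(x)=P_3(x)+Q_3(x)+D^2f(0)[x,Q_2(x)],
\end{equation*}
where the cross term arises from the bilinear expansion $P_2(x+Q_2(x)+\cdots)=P_2(x)+D^2f(0)[x,Q_2(x)]/1!\cdot\tfrac12\cdots$; here I will use that $P_2=\tfrac12 D^2f(0)[\,\cdot^2]$ so that $B=\tfrac12 D^2f(0)$ and $C=\tfrac12 D^2g(0)$ are exactly the bilinear operators in the statement. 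The key identity to nail down is the evaluation at $e$, namely $R_3(e)=P_3(e)+Q_3(e)+2B[e,Q_2(e)]$, which encodes the analog of $c_3=a_3+2a_2b_2+b_3$ in the one-dimensional case.

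Next I would compute the two remaining ingredients of $\Psi_e(f\circ g,\lambda,\mu)$ separately. For the middle term I need $\tfrac12 D^2(f\circ g)(0)[e,R_2(e)]$, where $R_2(e)=P_2(e)+Q_2(e)$; since $D^2(f\circ g)(0)=D^2f(0)$ (the second derivative of a composition of normalized mappings at the origin equals that of the outer map because $Dg(0)=\Id$), this term becomes $B[e,P_2(e)+Q_2(e)]=B[e,P_2(e)]+B[e,Q_2(e)]$. For the last term I need $\langle R_2(e),e^*\rangle R_2(e)=\langle P_2(e)+Q_2(e),e^*\rangle(P_2(e)+Q_2(e))$, which expands into four summands: the two diagonal ones $\langle P_2(e),e^*\rangle P_2(e)$ and $\langle Q_2(e),e^*\rangle Q_2(e)$, plus the two cross terms $\langle P_2(e),e^*\rangle Q_2(e)+\langle Q_2(e),e^*\rangle P_2(e)$.

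The final step is pure assembly: substitute the three computed pieces into
\begin{equation*}
\Psi_e(f\circ g,\lambda,\mu)=R_3(e)-\tfrac\mu2 D^2(f\circ g)(0)[e,R_2(e)]-(\lambda-\mu)\langle R_2(e),e^*\rangle R_2(e),
\end{equation*}
and regroup so that the diagonal $P$-terms reconstitute $\Psi_e(f,\lambda,\mu)$ and the diagonal $Q$-terms reconstitute $\Psi_e(g,\lambda,\mu)$. What remains are the cross terms: the factor $-(\lambda-\mu)$ multiplying $\langle P_2(e),e^*\rangle Q_2(e)+\langle Q_2(e),e^*\rangle P_2(e)$, and the bilinear cross terms. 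Here I must account for the $2B[e,Q_2(e)]$ from $R_3(e)$ together with the $-\tfrac\mu2\cdot 2B[e,Q_2(e)]=-\mu B[e,Q_2(e)]$ coming from the middle term; combining gives $(2-\mu)B[e,Q_2(e)]=-(\mu-2)B[e,Q_2(e)]$, matching the stated formula. The term $-\mu C[e,P_2(e)]$ should emerge from the $Q_3$ contribution relative to $\Psi_e(g,\lambda,\mu)$, where $C=\tfrac12 D^2g(0)$ enters because $Q_2=\tfrac12 D^2g(0)[\,\cdot^2]$.

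The main obstacle I anticipate is the asymmetry in the cross terms: the composition is not symmetric in $f$ and $g$, so the outer map $f$ contributes a term through its second derivative evaluated on $Q_2(e)$ while the inner map $g$ contributes differently, and keeping track of which bilinear operator ($B$ or $C$) and which coefficient ($\mu$ or $\mu-2$) attaches to which cross term requires scrupulous care. The delicate point is verifying that when one reassembles $\Psi_e(g,\lambda,\mu)$ from the $Q$-terms, the quantity $\tfrac\mu2 D^2g(0)[e,Q_2(e)]=\mu C[e,Q_2(e)]$ is already absorbed, leaving precisely the residual $-\mu C[e,P_2(e)]$ rather than a $Q$-argument; I would double-check this by expanding $\Psi_e(g,\lambda,\mu)$ explicitly and subtracting, to confirm the bilinear remainder has the asserted arguments and coefficients.
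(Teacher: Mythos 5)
Your overall strategy --- expand $f\circ g$ to third order, evaluate the three ingredients of Definition~\ref{def-FS-oper} at $e$, and reassemble --- is exactly the paper's route, and your formulas $R_2=P_2+Q_2$ and $R_3(e)=P_3(e)+Q_3(e)+2B[e,Q_2(e)]$ are correct. But the proof breaks at the middle term: your claim that $D^2(f\circ g)(0)=D^2f(0)$ because $Dg(0)=\Id$ is false. The chain rule for second derivatives gives
$D^2(f\circ g)(0)[u,v]=D^2f(g(0))[Dg(0)u,Dg(0)v]+Df(g(0))\bigl[D^2g(0)[u,v]\bigr]
=D^2f(0)[u,v]+D^2g(0)[u,v]$,
i.e.\ $\frac12 D^2(f\circ g)(0)=B+C$. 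Your own computation already contradicts your premise: since $R_2(x)=\frac12 D^2(f\circ g)(0)[x^2]$ and you derived $R_2=P_2+Q_2$, the associated symmetric bilinear operator must be $B+C$, not $B$.

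This is not a cosmetic slip. With the correct operator, the middle term of $\Psi_e(f\circ g,\lambda,\mu)$ is $-\mu\,(B+C)[e,P_2(e)+Q_2(e)]$, which supplies both the $-\mu C[e,Q_2(e)]$ needed to reconstitute $\Psi_e(g,\lambda,\mu)$ and the residual $-\mu C[e,P_2(e)]$ appearing in \eqref{Psi-compo}. Under your premise the term $-\mu C[e,Q_2(e)]$ is missing, so reassembling $\Psi_e(g,\lambda,\mu)$ would leave a spurious $+\mu C[e,Q_2(e)]$ and produce no $-\mu C[e,P_2(e)]$ at all; your hope that the latter ``emerges from the $Q_3$ contribution'' cannot work, since $Q_3(e)$ enters $R_3(e)$ only additively and is wholly consumed in forming $\Psi_e(g,\lambda,\mu)$. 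You yourself flagged this as the delicate point needing verification; the resolution is the corrected chain rule above, after which your assembly goes through verbatim and coincides with the paper's proof, which arrives at the same four terms $B[e,P_2(e)]+C[e,P_2(e)]+B[e,Q_2(e)]+C[e,Q_2(e)]$ (via the polarization identity of Lemma~\ref{lem-sym}).
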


\begin{proof}
Take $x$ close enough to the origin. Then by \eqref{series-2} we get 
\begin{eqnarray*}\label{compos}
&& (f\circ g)(x)= \\
&\!\!=\!\!& \left(x+Q_2(x)+Q_3(x)+P_2(x+Q_2(x))+P_3(x)+\ldots\right)\\
&\!\!=\!\!&x+ \left(Q_2(x)+Q_3(x)+B[(x+Q_2(x))^2]+P_3(x)+\ldots\right)\\
&\!\!=\!\!& x\!+\! \left\{Q_2(x) \!+\! P_2(x)\right\}\!+\! \left\{Q_3(x) \!+\! 2B[x,Q_2(x)] \!+\! P_3(x)\right\}\!+\ldots.
\end{eqnarray*}
 
Denoting $R_k(x):=\frac{1}{k!} D^k (f\circ g)(0)$, one concludes
\begin{equation}\label{R2R3}
R_2(x)= P_2(x)+ Q_2(x), \ R_3(x)=2 B[x, Q_2(x)]+ P_3(x)+ Q_3(x).
\end{equation}

Let us use the polarization identity~\eqref{symmet} to calculate the Fekete--Szeg\"o mapping:
\begin{eqnarray*}
   \Psi_e( f\circ g,\lambda,\mu)  &=&  2B[e, Q_2(e)]+ P_3(e)+ Q_3(e)  \\
   &-& (\lambda-\mu) \left\{  \left\langle P_2(e),e^* \right\rangle P_2(e) +
   \left\langle P_2(e) ,e^* \right\rangle Q_2(e) \right. \\
   &+&\left.  \left\langle Q_2(e),e^* \right\rangle P_2(e) +
    \left\langle Q_2(e),e^* \right\rangle Q_2(e) \right\}   \\
   &-&  \mu \left( B[e, P_2(e)] + C[e, P_2(e)]  + B[e, Q_2(e)] + C[e, Q_2(e)] \right) ,
\end{eqnarray*}
which implies \eqref{Psi-compo}.
\end{proof}

Note that in the one-dimensional case if $a_2b_2=0$ then
$\psi(f\circ g,\lambda)= \psi(f,\lambda) + \psi(g,\lambda)$. In the multi-dimensional settings, Theorem~\ref{lemm-compos} implies
\begin{corol}[Addition formula]\label{corol-odd}
If either $P_2 = 0$  or $Q_2= 0$, then 
\[
\Psi_e( f\circ g,\lambda,\mu)  =  \Psi_e( f , \lambda, \mu)  +     \Psi_e( g, \lambda, \mu).
\]
\end{corol}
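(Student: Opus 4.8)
The plan is to derive Corollary~\ref{corol-odd} directly from the composition formula~\eqref{Psi-compo} established in Theorem~\ref{lemm-compos} by simply setting the relevant quadratic polynomial to zero. The key observation is that the ``error term'' in~\eqref{Psi-compo}---namely
\[
-(\lambda-\mu)\left[\left\langle P_2(e),e^*\right\rangle Q_2(e)+\left\langle Q_2(e),e^*\right\rangle P_2(e)\right]-\mu C[e,P_2(e)]-(\mu-2)B[e,Q_2(e)]
\]
---is a sum of bilinear expressions, each of which contains either $P_2$ or $Q_2$ as one of its arguments. Hence if one of these homogeneous polynomials vanishes identically, every summand in the error term vanishes as well.

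Concretely, first I would invoke Theorem~\ref{lemm-compos} to write $\Psi_e(f\circ g,\lambda,\mu)$ as the sum $\Psi_e(f,\lambda,\mu)+\Psi_e(g,\lambda,\mu)$ plus the error term displayed above. Then I would treat the two hypotheses separately. If $Q_2=0$, then $Q_2(e)=0$, so the bracketed cross term and the term $-(\mu-2)B[e,Q_2(e)]$ both vanish; moreover $C=\tfrac12 D^2g(0)$ is precisely the bilinear operator whose diagonal restriction gives $Q_2$, so $C[e,P_2(e)]=0$ as well (since $Q_2\equiv 0$ forces $D^2g(0)=0$). Symmetrically, if $P_2=0$, then $P_2(e)=0$ kills the bracket and the term $-\mu C[e,P_2(e)]$, while $B=\tfrac12 D^2f(0)=0$ kills $-(\mu-2)B[e,Q_2(e)]$. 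In either case the entire error term is zero, leaving exactly the desired addition formula.

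The only point requiring a little care---and the closest thing to an obstacle---is making the link between the vanishing of the \emph{homogeneous polynomial} $P_2$ (resp.\ $Q_2$) and the vanishing of the associated \emph{bilinear operator} $B=\tfrac12 D^2f(0)$ (resp.\ $C=\tfrac12 D^2g(0)$) when evaluated on the mixed pair of arguments. Since $P_2(x)=\tfrac12 D^2f(0)[x^2]=B[x^2]$, the hypothesis $P_2\equiv 0$ means $B[x^2]=0$ for all $x$, and by the polarization identity~\eqref{symmet} this is equivalent to $B[x_1,x_2]=0$ for all $x_1,x_2$, hence in particular $B[e,Q_2(e)]=0$. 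The same argument applies verbatim to $C$ and $Q_2$. Thus invoking Lemma~\ref{lem-sym} at this step closes the gap cleanly, and no further computation is needed.
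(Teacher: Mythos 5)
Your proof is correct and follows exactly the route the paper intends: the corollary is stated as an immediate consequence of Theorem~\ref{lemm-compos}, with the error term in~\eqref{Psi-compo} vanishing once $P_2\equiv 0$ or $Q_2\equiv 0$. Your explicit use of the polarization identity (Lemma~\ref{lem-sym}) to pass from $P_2\equiv 0$ to $B\equiv 0$ (so that $B[e,Q_2(e)]=0$, and symmetrically for $C$) is precisely the step the paper leaves implicit, and you handle it correctly.
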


Another consequence of Theorem~\ref{lemm-compos} and formulas~\eqref{FS-func1}, \eqref{FS-func2} and~\eqref{FS-oper1}~is
\begin{corol}
 \begin{eqnarray*}
          \psi_e^{(1)}(f\circ g,\lambda)   &=&   \psi_e^{(0)}(f,\lambda) + \psi_e^{(0)}(g,\lambda) + 2 \left\langle B_2[e,Q_2(e)], e^*\right\rangle \\
         && -  2\lambda  \left\langle P_2(e) ,e^* \right\rangle \left\langle Q_2(e),e^*\right\rangle ,      \\
          \psi_e^{(2)}(f\circ g,\lambda)   &=&   \psi_e^{(2)}(f,\lambda) + \psi_e^{(2)}(g,\lambda) \\
        &&  + \left\langle  B_2[e,Q_2(e)]   -  C_2[e,P_2(e)], e^*\right\rangle  \\
        &&  - 2 (\lambda-1) \left\langle P_2(e) ,e^* \right\rangle \left\langle Q_2(e),e^*\right\rangle  ,  \\
          \Psi_e^{(1)}(f\circ g,\lambda)   &=& \Psi_e^{(1)}(f,\lambda) + \Psi_e^{(1)}(g,\lambda) \\
        && - \lambda C_2[e,P_2(e)] + (2-\lambda)B_2[e,Q_2(e)] .
 \end{eqnarray*} 
\end{corol}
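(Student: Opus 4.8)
The plan is to derive each of the three composition identities directly from the master formula \eqref{Psi-compo} of Theorem~\ref{lemm-compos} by pairing with the support functional $e^*$ (for the scalar functionals) or specializing the parameter $\mu$ (for the $X$-valued one). Throughout I write $B=\frac12 D^2f(0)$ and $C=\frac12 D^2g(0)$, so that $B[e,P_2(e)]$ and $C[e,P_2(e)]$ are exactly the bilinear terms appearing in Definition~\ref{def-FS-oper}; note the corollary's statement uses the notation $B_2,C_2$ for these same operators. The structural observation that makes everything work is that the two ``error'' terms in \eqref{Psi-compo}, namely the $\langle P_2(e),e^*\rangle Q_2(e)+\langle Q_2(e),e^*\rangle P_2(e)$ line and the $-\mu C[e,P_2(e)]-(\mu-2)B[e,Q_2(e)]$ line, reduce to simple scalar quantities once one applies $\langle\,\cdot\,,e^*\rangle$ and uses $\langle e,e^*\rangle=1$.

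For $\psi_e^{(1)}(f\circ g,\lambda)$ I would set $\mu=0$, since by \eqref{FS-previ} we have $\psi_e^{(1)}(\cdot,\lambda)=\langle\Psi_e(\cdot,\lambda,0),e^*\rangle$. Pairing \eqref{Psi-compo} with $e^*$ at $\mu=0$ kills the $\mu C[e,P_2(e)]$ term, leaves the $-(\mu-2)B[e,Q_2(e)]=2B[e,Q_2(e)]$ term, and turns the bracketed line into $-\lambda\bigl(\langle P_2(e),e^*\rangle\langle Q_2(e),e^*\rangle+\langle Q_2(e),e^*\rangle\langle P_2(e),e^*\rangle\bigr)=-2\lambda\langle P_2(e),e^*\rangle\langle Q_2(e),e^*\rangle$. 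This yields precisely the first displayed identity. For $\psi_e^{(2)}(f\circ g,\lambda)$ I would instead set $\mu=1$, use $\psi_e^{(2)}(\cdot,\lambda)=\langle\Psi_e(\cdot,\lambda,1),e^*\rangle$, and pair with $e^*$: now $\lambda-\mu=\lambda-1$ reproduces the $-2(\lambda-1)\langle P_2(e),e^*\rangle\langle Q_2(e),e^*\rangle$ term, while the last line becomes $-\langle C[e,P_2(e)],e^*\rangle-(1-2)\langle B[e,Q_2(e)],e^*\rangle=\langle B[e,Q_2(e)]-C[e,P_2(e)],e^*\rangle$, matching the second identity. Finally, for $\Psi_e^{(1)}(f\circ g,\lambda)$ I would use $\Psi_e^{(1)}(\cdot,\lambda)=\Psi_e(\cdot,\lambda,\lambda)$ from \eqref{FS-previ}; setting $\mu=\lambda$ in \eqref{Psi-compo} makes $\lambda-\mu=0$, so the entire bracketed scalar line vanishes, and the last line collapses to $-\lambda C[e,P_2(e)]-(\lambda-2)B[e,Q_2(e)]=-\lambda C[e,P_2(e)]+(2-\lambda)B[e,Q_2(e)]$, which is the third (vector-valued) identity.

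The main obstacle is essentially bookkeeping rather than genuine difficulty: one must be careful that the corollary's notation $B_2,C_2$ coincides with the operators $B=\frac12D^2f(0)$, $C=\frac12D^2g(0)$ from Theorem~\ref{lemm-compos}, and that the symmetric bilinear terms $B[e,Q_2(e)]$ are exactly what \eqref{Psi-compo} produces (no hidden polarization step is needed, since \eqref{Psi-compo} already records these operators in polarized bilinear form). The only point requiring a moment's attention is the cancellation pattern in the $\psi_e^{(1)}$ line: one should double-check that the mixed term $2\langle B[e,Q_2(e)],e^*\rangle$ survives for $\psi_e^{(1)}$ (since $\mu=0$ eliminates $C$ but not $B$), explaining the asymmetry between $f$ and $g$ in that first formula, whereas for $\psi_e^{(2)}$ both $B$ and $C$ contribute symmetrically. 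Once these identifications are fixed, each of the three identities is a one-line substitution into \eqref{Psi-compo}, and no further estimation or convergence argument is required because all computations take place at the level of finitely many homogeneous polynomial coefficients.
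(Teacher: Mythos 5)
Your proof is correct and takes exactly the paper's route: the corollary is stated there as an immediate consequence of Theorem~\ref{lemm-compos} together with the identifications in \eqref{FS-previ}, which is precisely your substitutions $\mu=0$, $\mu=1$, and $\mu=\lambda$ into \eqref{Psi-compo} followed (in the scalar cases) by pairing with $e^*$. As a side remark, your computation confirms that the paper's ``$\psi_e^{(0)}$'' in the first identity is a typo for $\psi_e^{(1)}$, and that $B_2$, $C_2$ are the operators $B=\frac12 D^2f(0)$, $C=\frac12 D^2g(0)$ of Theorem~\ref{lemm-compos}.
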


Observe that for mappings of one-dimensional type formula~\eqref{Psi-compo} takes an essentially simpler form.
\begin{corol}[cf.~formula~\eqref{psi-compo-1dim}]\label{corol-add}
Let $f$ and $g $ be represented by the series  of homogeneous polynomials
$f(x)=x+  \sum_{k\ge2}p_k(x)x$ and $g(x)=x+\sum_{k\ge2}q_k(x)x $, where $p_k,q_k \in \Hol(\B,\C)$,  $k \in \N$.
Then 
\begin{eqnarray*}\label{Psi-comp-1dim-fg}
&&\Psi_e( f\circ g,\lambda,\mu) =  \Psi_e( f, \lambda, \mu)  + 
                       \Psi_e( g, \lambda, \mu) -2( \lambda-1)p_2(e) q_2(e)e.
\end{eqnarray*}
  \end{corol}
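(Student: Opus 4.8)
The plan is to specialize the general composition formula in Theorem~\ref{lemm-compos} to mappings of one-dimensional type and then to simplify each of the four correction terms using Lemma~\ref{lemm-F-S-norm1} together with the elementary identity that the second Fr\'echet derivative of a one-dimensional type mapping acts as multiplication. Concretely, since $P_2(x)=p_2(x)x$ and $Q_2(x)=q_2(x)x$ are homogeneous polynomials of degree two, I first record that $\langle P_2(e),e^*\rangle = p_2(e)$ and $\langle Q_2(e),e^*\rangle = q_2(e)$, because $\langle e,e^*\rangle = \|e\|^2 = 1$.

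Next I would evaluate the bilinear terms $B[e,P_2(e)]$ and $B[e,Q_2(e)]$, where $B=\frac12 D^2 f(0)$. Exactly as in the proof of Lemma~\ref{lemm-F-S-norm}, one-dimensional type of $P_2$ gives $B[e^2]=P_2(e)=p_2(e)e$, so by bilinearity and symmetry $B[e,P_2(e)] = p_2(e)\,B[e^2] = p_2(e)^2 e$, and likewise $B[e,Q_2(e)] = q_2(e)\,B[e^2] = p_2(e) q_2(e)\,e$. Symmetrically, writing $C=\frac12 D^2 g(0)$, one has $C[e^2]=Q_2(e)=q_2(e)e$ and hence $C[e,P_2(e)] = p_2(e)\,C[e^2] = p_2(e) q_2(e)\,e$. (Here I use that $P_2(e)$ and $Q_2(e)$ are scalar multiples of $e$, which is precisely the content of $P_2,Q_2$ being of one-dimensional type evaluated at $e$.)

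With these evaluations in hand, I substitute into formula~\eqref{Psi-compo}. The two terms on the second line combine as $-(\lambda-\mu)\bigl[p_2(e)\,Q_2(e)+q_2(e)\,P_2(e)\bigr] = -(\lambda-\mu)\bigl[p_2(e)q_2(e)e + q_2(e)p_2(e)e\bigr] = -2(\lambda-\mu)p_2(e)q_2(e)\,e$, while the third line contributes $-\mu\,C[e,P_2(e)] - (\mu-2)B[e,Q_2(e)] = -\mu\,p_2(e)q_2(e)e - (\mu-2)p_2(e)q_2(e)e = -(2\mu-2)p_2(e)q_2(e)\,e$. Adding the two collected coefficients gives $-2(\lambda-\mu)-(2\mu-2) = -2\lambda + 2\mu - 2\mu + 2 = -2(\lambda-1)$, so the total correction is $-2(\lambda-1)p_2(e)q_2(e)\,e$, which is exactly the claimed formula.

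The computation is routine once the one-dimensional type reductions are performed, so there is no deep obstacle; the only point requiring care is the bookkeeping that makes the $\mu$-dependence cancel, which is the structural reason the answer matches the one-dimensional formula~\eqref{psi-compo-1dim} independently of $\mu$. I would therefore present the proof as a short chain of substitutions, emphasizing the cancellation $-2(\lambda-\mu)-(2\mu-2)=-2(\lambda-1)$ as the key simplification and invoking Lemma~\ref{lemm-F-S-norm1} to justify that $P_2,P_3,Q_2,Q_3$ remain of one-dimensional type so that every term produced is a scalar multiple of $e$.
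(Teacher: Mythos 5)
Your proof is correct and follows essentially the same route as the paper's: both substitute $P_2(e)=p_2(e)e$ and $Q_2(e)=q_2(e)e$ into formula~\eqref{Psi-compo}, reduce the bilinear correction terms to $C[e,P_2(e)]=B[e,Q_2(e)]=p_2(e)q_2(e)e$, and collect coefficients so that $-2(\lambda-\mu)-\mu-(\mu-2)=-2(\lambda-1)$, exactly the cancellation the paper leaves implicit. Your appeal to Lemma~\ref{lemm-F-S-norm1} is unnecessary (the one-dimensional form of each $P_k$, $Q_k$ is already part of the hypothesis), but this is harmless.
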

\begin{proof}
Formula~\eqref{Psi-compo}  implies
  \begin{eqnarray*}\label{Psi-compo-1dim1-1}
&&\Psi_e( f\circ g,\lambda,\mu) =  \Psi_e( f, \lambda, \mu)  + 
                       \Psi_e( g, \lambda, \mu) \\
&&- (\lambda-\mu)q_2(e)\Bigl\{  \left\langle p_2(e)e ,e^* \right\rangle e
+ p_2(e)e\Bigr\}     \\
 && -\mu  C_2[e,p_2(e)e]   -   q_2(e)(\mu-2) p_2(e)e,
\end{eqnarray*}
which is equivalent to the claim.
\end{proof}

Next,  consider some  applications of Theorem~\ref{lemm-compos}.  Since $f$ is  holomorphic in a neighborhood of the origin,  for every $n\in \N$ its iterate  $f^n:=\underbrace{ f\circ f\circ\ldots\circ f}\limits_{n\ \scriptstyle{times}}$,  is well-defined in a (perhaps, smaller) neighborhood. Therefore one can represent $f^n$ by a series of homogeneous polynomials and hence calculate $\Psi_e(f^n,\lambda,\mu)$.

\begin{propo}\label{thm-itera}
Let  $f$ be represented by the series \eqref{series-2}. For $n\in\N$  denote $T_{n,j}(x):=\frac{1}{j!}D^j(f^n)(0)[x^j]$. Then
  \begin{eqnarray*}
    T_{n,2}(x) &=& n P_2(x),  \\
    T_{n,3} (x) &=& n P_3(x) +(n^2-n) B[x,P_2(x)],  \\
    \Psi_e(f^n,\lambda,\mu) &=&n \Psi_e(f,\lambda_n,\mu_n),
  \end{eqnarray*}
  where $\lambda_n=n\lambda-n+1$ and $\mu_n=n\mu-n+1$. (As above, we denote $B=\frac12D^2f(0)$.)
\end{propo}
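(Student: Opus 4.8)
The plan is to prove all three formulas simultaneously by induction on $n$, using the composition result of Theorem~\ref{lemm-compos} at each step. For the induction base $n=1$ the claims are trivial: $T_{1,2}=P_2$, $T_{1,3}=P_3$, and $\Psi_e(f,\lambda_1,\mu_1)=\Psi_e(f,\lambda,\mu)$ since $\lambda_1=\lambda$ and $\mu_1=\mu$. For the inductive step I would write $f^{n+1}=f\circ f^n$ and apply the low-order composition relations~\eqref{R2R3}, which say that under composition the quadratic parts add and the cubic part of $f\circ h$ equals $2B[x,(\text{quadratic part of }h)]+(\text{cubic part of }f)+(\text{cubic part of }h)$. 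Here the outer mapping is $f$ (so $B=\tfrac12 D^2f(0)$ is exactly the operator named in the statement) and the inner mapping is $f^n$ with quadratic part $T_{n,2}$ and cubic part $T_{n,3}$.

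First I would establish the two polynomial identities. From~\eqref{R2R3} applied to $f\circ f^n$ one gets $T_{n+1,2}(x)=P_2(x)+T_{n,2}(x)=P_2(x)+nP_2(x)=(n+1)P_2(x)$, which closes the first induction. For the cubic part, \eqref{R2R3} gives
\[
T_{n+1,3}(x)=2B[x,T_{n,2}(x)]+P_3(x)+T_{n,3}(x).
\]
Substituting $T_{n,2}=nP_2$ and the inductive form of $T_{n,3}$ and collecting the $B[x,P_2(x)]$ terms yields $(n+1)P_3(x)+\bigl(2n+(n^2-n)\bigr)B[x,P_2(x)]=(n+1)P_3(x)+\bigl((n+1)^2-(n+1)\bigr)B[x,P_2(x)]$, as required. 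This is a short algebraic check: the coefficient of $B[x,P_2(x)]$ matches $(n+1)^2-(n+1)$ exactly.

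With the polynomial formulas in hand, the Fekete--Szeg\"o identity follows by evaluating Definition~\ref{def-FS-oper} on $f^n$. I would substitute $P_3\mapsto T_{n,3}(e)=nP_3(e)+(n^2-n)B[e,P_2(e)]$, note that $\tfrac12 D^2(f^n)(0)[e,T_{n,2}(e)]=nB[e,nP_2(e)]=n^2B[e,P_2(e)]$, and that $\langle T_{n,2}(e),e^*\rangle T_{n,2}(e)=n^2\langle P_2(e),e^*\rangle P_2(e)$. Grouping the coefficients of $P_3(e)$, of $B[e,P_2(e)]$, and of $\langle P_2(e),e^*\rangle P_2(e)$ and matching them against $n\Psi_e(f,\lambda_n,\mu_n)$ determines $\lambda_n$ and $\mu_n$; the $B[e,P_2(e)]$ coefficient forces $\mu_n=n\mu-n+1$ and the $\langle P_2(e),e^*\rangle P_2(e)$ coefficient forces $\lambda_n-\mu_n=n(\lambda-\mu)$, which together give $\lambda_n=n\lambda-n+1$. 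The only delicate point, which I would verify carefully rather than by invoking induction, is that this Fekete--Szeg\"o identity is in fact a direct consequence of the two closed-form polynomial expressions for $T_{n,2}$ and $T_{n,3}$, so the main obstacle is simply bookkeeping the three scalar coefficients correctly; there is no conceptual difficulty beyond the coefficient match for the $B[e,P_2(e)]$ term.
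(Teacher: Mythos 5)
Your proposal is correct, and it diverges from the paper in one worthwhile respect. The first two identities are handled exactly as in the paper: induction on $n$ via the low-order composition relations \eqref{R2R3} applied to $f^{n+1}=f\circ f^n$, with the same coefficient check $2n+(n^2-n)=(n+1)^2-(n+1)$. The difference is in the third identity. The paper keeps the induction running for the Fekete--Szeg\"o mapping as well: it applies the full composition formula of Theorem~\ref{lemm-compos} with $g=f^k$, invokes the inductive hypothesis $\Psi_e(f^k,\lambda,\mu)=k\Psi_e(f,\lambda_k,\mu_k)$, and then untangles a fairly messy bookkeeping of the parameters $\lambda_k,\mu_k$. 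You instead extract $\Psi_e(f^n,\lambda,\mu)$ directly from the closed forms $T_{n,2}=nP_2$ and $T_{n,3}=nP_3+(n^2-n)B[\cdot,P_2(\cdot)]$ by plugging into Definition~\ref{def-FS-oper} and matching the three coefficients; no induction and no appeal to Theorem~\ref{lemm-compos} is needed at that stage. Your route is the cleaner of the two: the parameter shift $\lambda_n=n\lambda-n+1$, $\mu_n=n\mu-n+1$ drops out of two linear equations rather than being verified through a nested substitution (and it sidesteps the spot in the paper's displayed computation where a stray factor $k$ appears in the final line). One small point you should make explicit: the step $\tfrac{\mu}{2}D^2(f^n)(0)[e,T_{n,2}(e)]=n\mu B[e,nP_2(e)]$ uses the passage from the identity of homogeneous quadratic polynomials $\tfrac12 D^2(f^n)(0)[x^2]=nB[x^2]$ to the identity of the underlying symmetric bilinear operators; this is justified by the polarization identity, Lemma~\ref{lem-sym}, which the paper supplies for exactly this purpose.
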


\begin{proof}
  We proof this theorem by induction. Obviously, the statement holds for $n=1$. Assume the claim holds for some $n=k$ and consider ${f^{k+1}=f\circ f^k.}$ In order to use \eqref{R2R3}, let take $g=f^k$. Then $T_{n,2} (x)=P_2(x) +kP_2(x)=(k+1)P_2(x)$ and
  \begin{eqnarray*}
    T_{n,3} (x) &=& 2B[x,kP_2(x)]+P_3(x) + k P_3(x) +(k^2-k) B[x,P_2(x)] \\
     &=& (k+1)P_3(x) + (2k+k^2-k) B[x, P_2(x)]  .
  \end{eqnarray*}
  So, it remains to prove the last equality in the theorem. Applying Theorem~\ref{lemm-compos} to $g=f^k$ again, we get
   \begin{eqnarray*} 
  \Psi_e(f^{k+1},\lambda,\mu)  &=&   \Psi_e(f,\lambda,\mu) +k \Psi_e(f,\lambda_k,\mu_k) \\
       && -(\lambda-\mu) \left\{  \left\langle P_2(e) ,e^* \right\rangle kP_2(e) + \left\langle kP_2(e),e^* \right\rangle P_2(e) \right\}   \\
        &&-  \mu kB[e,P_2(e)] + (2-\mu)B[e,kP_2(e)] \\
        &=&  P_3(e) - (\lambda-\mu) \left\langle P_2(e),e^* \right\rangle P_2(e) - \mu B[ e, P_2(e)  ]    \\ 
        &&   + k\bigl( P_3(e) - (\lambda_k-\mu_k) \left\langle P_2(e),e^* \right\rangle P_2(e) - \mu_k B\left[ e, P_2(e) \right] \bigr)   \\
       && -(\lambda-\mu)2k  \left\langle P_2(e) ,e^* \right\rangle P_2(e) - 2k(\mu-1) B[e,P_2(e)]    \\
       &=& (k+1)P_3(e)  - \left( (2k+1)\mu -2k+k\mu_k  \right)B[ e, P_2(e)  ]   \\
       &&- \left( (\lambda-\mu)(2k+1)+k(\lambda_k-\mu_k)  \right)   \left\langle kP_2(e),e^* \right\rangle P_2(e) .    
\end{eqnarray*}
  Taking in mind that $\lambda_k=k\lambda-k+1$ and $\mu_k=k\mu-k+1$, we complete the proof.   
\end{proof}

In addition, Theorem~\ref{lemm-compos} provides the connection between the Fekete--Szeg\"o mappings of inverses.

\begin{propo}\label{thm-compo}
  Let $f$  be represented by the series \eqref{series-2}. Then
$$\Psi_e(f^{-1},\lambda,\mu)=  -\Psi_e(f,2-\lambda,2-\mu).$$
In addition, denoting $g=f^{-1}$ we have for any $x\neq 0$,
\begin{equation}\label{R2R3-1}
\frac{1}{3!}D^3 g(0)[x^3]=-\|x\|^3 \cdot\Psi_e(f,2,2),\quad \mbox{where } e=\frac1{\|x\|}\,x.
\end{equation}
\end{propo}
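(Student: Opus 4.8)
The plan is to read the relation off the composition formula of Theorem~\ref{lemm-compos}, applied to the tautology $f\circ g=\Id$ with $g=f^{-1}$. Writing $g(x)=x+\sum_{k\ge2}Q_k(x)$ and keeping the notation $B=\frac12D^2f(0)$, $C=\frac12D^2g(0)$ from Theorem~\ref{lemm-compos}, I would first extract the low-order coefficients of $g$. Since $f\circ g=\Id$, the second and third homogeneous terms $R_2,R_3$ in \eqref{R2R3} vanish, which forces
\[
Q_2(x)=-P_2(x)\qquad\text{and}\qquad Q_3(x)=-P_3(x)+2B[x,P_2(x)].
\]
Read at the level of bilinear operators, the first identity says exactly $C=-B$; in particular $C[e,Q_2(e)]=B[e,P_2(e)]$ and $\langle Q_2(e),e^*\rangle Q_2(e)=\langle P_2(e),e^*\rangle P_2(e)$.

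For the main identity I would substitute these data into the composition formula \eqref{Psi-compo}. Because $\Id$ has vanishing second and third polynomials, Definition~\ref{def-FS-oper} gives $\Psi_e(\Id,\lambda,\mu)=0$, so \eqref{Psi-compo} becomes an equation that can be solved for $\Psi_e(g,\lambda,\mu)$. Using $Q_2=-P_2$ and $C=-B$ to collapse the correction terms, one obtains
\[
\Psi_e(g,\lambda,\mu)=-P_3(e)+(2-\mu)B[e,P_2(e)]-(\lambda-\mu)\langle P_2(e),e^*\rangle P_2(e).
\]
It then remains only to recognize the right-hand side: expanding $\Psi_e(f,2-\lambda,2-\mu)$ from Definition~\ref{def-FS-oper}, with $D^2f(0)[e,P_2(e)]=2B[e,P_2(e)]$ and the parameter combination $(2-\lambda)-(2-\mu)=\mu-\lambda$, yields precisely the negative of the displayed expression. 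This establishes $\Psi_e(f^{-1},\lambda,\mu)=-\Psi_e(f,2-\lambda,2-\mu)$.

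For the derivative formula \eqref{R2R3-1} I would specialize to $\lambda=\mu=2$. Here Definition~\ref{def-FS-oper} gives $\Psi_e(f,2,2)=P_3(e)-D^2f(0)[e,P_2(e)]=P_3(e)-2B[e,P_2(e)]$, so the expression for $Q_3$ obtained above reads $Q_3(e)=-\Psi_e(f,2,2)$. Since $\frac{1}{3!}D^3g(0)[x^3]=Q_3(x)$ is homogeneous of degree three, writing $x=\|x\|\,e$ with $e=x/\|x\|$ gives $Q_3(x)=\|x\|^3Q_3(e)=-\|x\|^3\Psi_e(f,2,2)$, which is exactly \eqref{R2R3-1}.

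Once $Q_2=-P_2$ and $C=-B$ are in hand the argument is essentially bookkeeping, so I do not expect a genuine conceptual obstacle. The one point requiring care is the parameter shift $\lambda\mapsto2-\lambda$, $\mu\mapsto2-\mu$, which is dictated by the coefficient $(2-\mu)$ multiplying $B[e,P_2(e)]$ together with the sign reversal of the $\langle P_2(e),e^*\rangle P_2(e)$ term; matching these two parameters correctly against Definition~\ref{def-FS-oper} is the most likely place for an arithmetic slip.
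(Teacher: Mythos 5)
Your proposal is correct and follows essentially the same route as the paper's proof: both apply the composition formula \eqref{Psi-compo} to $f\circ g=\Id$, read off $Q_2=-P_2$ and $Q_3(x)=-P_3(x)+2B[x,P_2(x)]$ from \eqref{R2R3}, obtain \eqref{R2R3-1} by homogeneity, and solve for $\Psi_e(g,\lambda,\mu)$ to match $-\Psi_e(f,2-\lambda,2-\mu)$ via the parameter identity $(2-\lambda)-(2-\mu)=\mu-\lambda$. Your explicit remark that $Q_2=-P_2$ forces $C=-B$ (by symmetry/polarization) is a small clarification the paper leaves implicit, but it is not a different argument.
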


\begin{proof} 
Represent $g$ by the series of homogeneous polynomials   $g(x)=x+Q_2(x)+Q_3(x)+\ldots$. Denote again $B=\frac12D^2f(0)$ and $C=\frac12D^2g(0)$.
Since $f\circ g= \Id$, it follows from \eqref{R2R3} that
\begin{equation}\label{R2R3-2}
Q_2(x)=-P_2(x), \quad  Q_3(x)=-2B[x,Q_2(x)]-P_3(x).
\end{equation}
Substituting $Q_2$ into $Q_3$ we get for any $e\in\partial\B$,
\begin{equation*}
Q_3(e)=-\left\{P_3(e)-D^2f(0)[e,P_2(e)]\right\}=-\Psi_e(f,2,2),
\end{equation*}
so \eqref{R2R3-1} holds.

In addition, in this case $\Psi_e(f\circ g,\lambda,\mu)=0$ for every $e\in\partial\B$ and any $\lambda,\mu\in\C$. Thus by \eqref{Psi-compo}, \eqref{R2R3-1} and  \eqref{R2R3-2} we have
\begin{eqnarray*}
\nonumber   0 &=&   \Psi_e(f,\lambda,\mu) + \Psi_e(g,\lambda,\mu) \\
\nonumber   &-& (\lambda-\mu) \left\{  \left\langle P_2(e) ,e^* \right\rangle Q_2(e) + \left\langle Q_2(e),e^* \right\rangle P_2(e) \right\}   \\
   &-&  \mu C[e,P_2(e)] + (2-\mu)B[e,Q_2(e)] ,
\end{eqnarray*}
or equivalently,
\begin{eqnarray*}
\Psi_e(g,\lambda,\mu) &\!\!=\!\!&
 (\lambda-\mu) \left\{  \left\langle P_2(e) ,e^* \right\rangle Q_2(e) + \left\langle Q_2(e),e^* \right\rangle P_2(e) \right\}   \\
   &\!\!+\!\!&  \mu C[e,P_2(e)] + (\mu-2)B[e,Q_2(e)] - \Psi_e(f,\lambda,\mu) \\
   &\!\! =\!\!& -\Psi_e(f,2-\lambda,2-\mu).
\end{eqnarray*}
The proof is complete.
\end{proof}

For $n\in\N$, it is natural to denote $f^{-n}=(f^{-1})^n$. Combining Propositions~\ref{thm-itera} and~\ref{thm-compo}, we get a general result: 

\begin{theorem}\label{thm-itera-gen}
Let  $f$ be represented by the series \eqref{series-2}. For $m\in\Z$  denote $T_{m,j}(x):=\frac{1}{j!}D^j(f^m)(0)[x^j]$. Then
  \begin{eqnarray*}
    T_{m,2}(x) &=& m P_2(x),  \\
    T_{m,3} (x) &=& m \|x\|^3\cdot \Psi_e(f,1-m,1-m),\quad \mbox{where } e=\frac1{\|x\|}\,x ,  \\
    \Psi_e(f^m,\lambda,\mu) &=& m \Psi_e(f,\lambda_m,\mu_m),
  \end{eqnarray*}
  where $\lambda_m=m\lambda-m+1$ and $\mu_m=m\mu-m+1$. 
\end{theorem}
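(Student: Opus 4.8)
The plan is to bootstrap from the two special cases already in hand: positive iterates (Proposition~\ref{thm-itera}) and a single inversion (Proposition~\ref{thm-compo}). Before splitting into cases, I would first note that the three asserted formulas are merely the Fekete--Szeg\"o repackaging of the coefficient identities proved for positive $n$. Unfolding Definition~\ref{def-FS-oper} at $\lambda=\mu=1-m$ annihilates the $(\lambda-\mu)$--term and gives $\Psi_e(f,1-m,1-m)=P_3(e)+(m-1)B[e,P_2(e)]$; since both $P_3$ and $x\mapsto B[x,P_2(x)]$ are homogeneous of degree three, multiplying by $m\|x\|^3$ converts the middle identity into the equivalent form $T_{m,3}(x)=mP_3(x)+(m^2-m)B[x,P_2(x)]$. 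For $m>0$ this, together with $T_{m,2}(x)=mP_2(x)$ and $\Psi_e(f^m,\lambda,\mu)=m\Psi_e(f,\lambda_m,\mu_m)$, is verbatim Proposition~\ref{thm-itera}; for $m=0$ we have $f^0=\Id$, whose homogeneous terms of order $\ge 2$ vanish, so all three identities read $0=0$.

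The only real content is therefore $m<0$. Writing $m=-n$ with $n\in\N$ and setting $g=f^{-1}$, I would use $f^m=g^n$ and apply Proposition~\ref{thm-itera} to $g$, obtaining $\Psi_e(f^m,\lambda,\mu)=n\Psi_e(g,\lambda_n,\mu_n)$ and expressing $T_{m,2},T_{m,3}$ through the homogeneous coefficients $Q_2,Q_3$ and the bilinear operator $C=\frac12 D^2g(0)$ of $g$. I then substitute the inverse relations from Proposition~\ref{thm-compo}: equation \eqref{R2R3-2} gives $Q_2=-P_2$ and $Q_3(x)=2B[x,P_2(x)]-P_3(x)$, polarization (Lemma~\ref{lem-sym}) of $Q_2=-P_2$ gives $C=-B$, and the parameter statement gives $\Psi_e(g,\lambda_n,\mu_n)=-\Psi_e(f,2-\lambda_n,2-\mu_n)$.

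The single algebraic fact that makes the two propositions compose is $2-\lambda_n=\lambda_m$ (and likewise $2-\mu_n=\mu_m$) when $m=-n$: from $\lambda_n=n\lambda-n+1$ one computes $2-\lambda_n=1+n-n\lambda=-n\lambda+n+1=\lambda_m$. With this, $n\Psi_e(g,\lambda_n,\mu_n)=-n\Psi_e(f,\lambda_m,\mu_m)=m\Psi_e(f,\lambda_m,\mu_m)$, which is the third formula. The coefficient formulas fall out of the same substitutions: $T_{m,2}=nQ_2=-nP_2=mP_2$, while feeding $Q_2,Q_3,C$ into $nQ_3(x)+(n^2-n)C[x,Q_2(x)]$ yields $-nP_3(x)+(n^2+n)B[x,P_2(x)]$, which is exactly $mP_3(x)+(m^2-m)B[x,P_2(x)]=m\|x\|^3\Psi_e(f,1-m,1-m)$.

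I do not anticipate a genuine obstacle; the care needed is purely bookkeeping, namely keeping the iterate shift $\lambda\mapsto\lambda_n$ and the inversion flip $\lambda\mapsto 2-\lambda$ straight and verifying that their composition collapses to the single shift $\lambda\mapsto\lambda_m$.
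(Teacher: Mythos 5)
Your proposal is correct and takes essentially the same approach as the paper: the paper gives no written-out proof of Theorem~\ref{thm-itera-gen}, stating only that it follows by combining Propositions~\ref{thm-itera} and~\ref{thm-compo}, and your case split ($m>0$ via Proposition~\ref{thm-itera}, $m=0$ trivially, $m=-n$ via $f^m=(f^{-1})^n$ with $Q_2=-P_2$, $Q_3(x)=2B[x,P_2(x)]-P_3(x)$, $C=-B$) is exactly that combination. Your bookkeeping checks out, including the key identity $2-\lambda_n=\lambda_m$ for $m=-n$ and the equivalence of the middle formula with $T_{m,3}(x)=mP_3(x)+(m^2-m)B[x,P_2(x)]$ via homogeneity and the vanishing of the $(\lambda-\mu)$-term at $\lambda=\mu=1-m$.
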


As an immediate consequence of Theorem~\ref{thm-itera-gen} we get a new relation for the Fekete--Szeg\"o functional introduced in \cite{HKK2021}, see~\eqref{FS-previ}.

\begin{corol} For any $m\in\Z$, the following equality holds:
\[
\psi_e^{(2)}(f^m,\lambda)=m    \psi_e^{(2)}(f,m\lambda-m+1)  . 
\]
\end{corol}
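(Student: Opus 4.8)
The plan is to read off the corollary directly from Theorem~\ref{thm-itera-gen} by inserting the correct parameter value, so the whole argument amounts to recognizing which specialization of the Fekete--Szeg\"o mapping reproduces the functional $\psi_e^{(2)}$. The key input is the translation table \eqref{FS-previ}, which records that
\[
\psi_e^{(2)}(f,\lambda) = \left\langle \Psi_e(f,\lambda,1), e^* \right\rangle;
\]
that is, $\psi_e^{(2)}$ is exactly the scalar component (paired against $e^*$) of the $X$-valued mapping $\Psi_e(\cdot,\lambda,\mu)$ at the value $\mu=1$. So the first step is simply to write $\psi_e^{(2)}(f^m,\lambda)=\langle \Psi_e(f^m,\lambda,1), e^*\rangle$.

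Next I would apply the third identity of Theorem~\ref{thm-itera-gen}, namely $\Psi_e(f^m,\lambda,\mu)=m\,\Psi_e(f,\lambda_m,\mu_m)$ with $\lambda_m=m\lambda-m+1$ and $\mu_m=m\mu-m+1$, specialized to $\mu=1$. The crucial observation — and really the only thing worth checking — is that the value $\mu=1$ is a \emph{fixed point} of the parameter transformation $\mu\mapsto\mu_m$, since $\mu_m=m\cdot 1-m+1=1$ for every $m\in\Z$. Consequently the second slot stays equal to $1$ after iteration, and we obtain $\Psi_e(f^m,\lambda,1)=m\,\Psi_e(f,m\lambda-m+1,1)$ without leaking into a mapping with a different $\mu$-parameter.

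Finally I would pair both sides with $e^*$ and use linearity of the functional $x\mapsto\langle x,e^*\rangle$ together with the table \eqref{FS-previ} once more, reading $\langle \Psi_e(f,m\lambda-m+1,1),e^*\rangle$ back as $\psi_e^{(2)}(f,m\lambda-m+1)$. This yields $\psi_e^{(2)}(f^m,\lambda)=m\,\psi_e^{(2)}(f,m\lambda-m+1)$, as claimed. There is essentially no obstacle here: the proof is a one-line specialization, and the only substantive point is verifying that $\mu=1$ is invariant under $\mu\mapsto m\mu-m+1$, which is what guarantees that the identity of Theorem~\ref{thm-itera-gen} restricts cleanly to the functional $\psi_e^{(2)}$ rather than producing a hybrid of the functionals in \eqref{FS-func2}.
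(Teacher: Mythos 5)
Your proposal is correct and matches the paper's route exactly: the paper states the corollary as an immediate consequence of Theorem~\ref{thm-itera-gen} via the identity $\psi_e^{(2)}(f,\lambda)=\left\langle \Psi_e(f,\lambda,1),e^*\right\rangle$ from \eqref{FS-previ}, which is precisely your specialization. Your explicit verification that $\mu=1$ is a fixed point of $\mu\mapsto m\mu-m+1$ is the one computation the paper leaves implicit, and you have it right.
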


\medskip

\subsection{Estimates for the Fekete--Szeg\"o mapping}\label{ssect-devia}

Let us denote 
$$R(f,g):= \Psi_e( f\circ g,\lambda,\mu) -\bigl(\Psi_e( f , \lambda, \mu)  +     \Psi_e( g, \lambda, \mu)\bigr). $$
Recall that by the addition formula (Corollary~\ref{corol-odd}), $R(f,g)=0$  when either $P_2(e)=0$  or $Q_2(e)=0$. Observe that otherwise $R(f,g)$ is the ``error term'' for the addition relation. In the case where the mappings $f$ and $g$ are of one-dimensional type, $\| R(f,g) \| = 2 |1-\lambda| \cdot |p_2(e)q_2(e)|$ by Corollary~\ref{corol-add}.

Now we estimate  the ``error term'' $R(f,g)$ in general. 

\begin{theorem}
Let $f$ and $g$ be represented by the series \eqref{series-2}. Denote $N_f:=\left\|\frac12 D^2f(0)\right\|$ and $N_g:=\left\|\frac12 D^2g(0)\right\|$.
Then $$\|R(f,g) \| \le \ell(\lambda,\mu)\cdot N_f\,N_g,$$
where $\ell (\lambda,\mu)=2 |\lambda-\mu|  + |\mu|   +   |\mu-2| $.
\end{theorem}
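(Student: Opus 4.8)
The plan is to invoke the composition formula \eqref{Psi-compo} from Theorem~\ref{lemm-compos} directly. By definition, $R(f,g)$ is precisely the collection of terms on the right-hand side of \eqref{Psi-compo} that remain after subtracting $\Psi_e(f,\lambda,\mu) + \Psi_e(g,\lambda,\mu)$. Thus
\begin{equation*}
R(f,g) = -(\lambda-\mu)\left[ \left\langle P_2(e),e^*\right\rangle Q_2(e) + \left\langle Q_2(e),e^*\right\rangle P_2(e)\right] - \mu\, C[e,P_2(e)] - (\mu-2) B[e,Q_2(e)],
\end{equation*}
where $B = \frac12 D^2f(0)$ and $C = \frac12 D^2g(0)$. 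The whole task reduces to bounding the norm of this explicit expression, so no further structural work is needed.

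The key step is to apply the triangle inequality term by term and estimate each of the four pieces. I would first record that $P_2(e) = B[e^2]$ and $Q_2(e) = C[e^2]$, so $\|P_2(e)\| \le N_f$ and $\|Q_2(e)\| \le N_g$ since $\|e\|=1$. For the two bracketed inner-product terms, I use $|\langle P_2(e),e^*\rangle| \le \|P_2(e)\|\,\|e^*\| \le N_f$ (recalling $\|e^*\| = \|e\| = 1$) and similarly $|\langle Q_2(e),e^*\rangle| \le N_g$; each of the two products $\langle P_2(e),e^*\rangle Q_2(e)$ and $\langle Q_2(e),e^*\rangle P_2(e)$ therefore has norm at most $N_f N_g$, contributing $2|\lambda-\mu|\,N_f N_g$. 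For the bilinear terms, since $B$ is symmetric bilinear, $\|B[e,Q_2(e)]\| \le \|B\|\,\|e\|\,\|Q_2(e)\| \le N_f N_g$, and likewise $\|C[e,P_2(e)]\| \le N_g N_f$. Assembling the four bounds gives
\begin{equation*}
\|R(f,g)\| \le 2|\lambda-\mu|\,N_f N_g + |\mu|\,N_f N_g + |\mu-2|\,N_f N_g = \ell(\lambda,\mu)\,N_f N_g,
\end{equation*}
as claimed.

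This argument is essentially a bookkeeping exercise, so I do not anticipate a genuine obstacle; the only point requiring a little care is the norm of the operator $B$ (respectively $C$) viewed as a bilinear form versus the norm $N_f = \|\frac12 D^2 f(0)\|$ appearing in the statement. I would confirm that the operator norm of the symmetric bilinear map $B$ satisfies $\|B[u,v]\| \le \|B\|\,\|u\|\,\|v\|$ with the same constant $N_f$, which is the standard convention for the norm of a symmetric multilinear operator and is consistent with the estimates $\|P_2(e)\| = \|B[e^2]\| \le N_f$ used above. With that convention fixed, every estimate above is uniform in the choice of $e^*\in J(e)$, and the proof is complete.
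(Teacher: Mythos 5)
Your proof is correct and follows essentially the same route as the paper: both start from the explicit expression for $R(f,g)$ given by the composition formula \eqref{Psi-compo}, apply the triangle inequality term by term, bound the pairing terms via $|\langle P_2(e),e^*\rangle|\le\|P_2(e)\|\le N_f$ (using $\|e\|=\|e^*\|=1$ and $P_2(e)=B[e^2]$, $Q_2(e)=C[e^2]$), and bound the bilinear terms by $\|B[e,Q_2(e)]\|\le N_f N_g$ and $\|C[e,P_2(e)]\|\le N_g N_f$. The only cosmetic difference is that the paper keeps the intermediate bounds in terms of $\|P_2(e)\|$ and $\|Q_2(e)\|$ and substitutes $\|P_2(e)\|\le N_f$, $\|Q_2(e)\|\le N_g$ at the very end, whereas you substitute immediately.
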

\begin{proof}
Let $B:=\frac12 D^2f(0)$ and  $C:=\frac12 D^2g(0)$ as above. 

We estimate the terms in the right-hand side in formula~\eqref{Psi-compo}:
\begin{eqnarray*}\label{QP1}
&& (1) \quad\|  \left\langle P_2(e) ,e^* \right\rangle  Q_2(e)\| = | \left\langle P_2(e) ,e^* \right\rangle|\cdot \|Q_2(e)\|\leq  \|P_2(e)\| \|Q_2(e)\| ;\\
&& (2) \quad\|\left\langle Q_2(e),e^* \right\rangle P_2(e) \| = |\left\langle Q_2(e),e^* \right\rangle |\cdot \| P_2(e)\|\leq  \|P_2(e)\| \|Q_2(e)\| ;\\
&& (3) \quad \|C[e,P_2(e)]\|\leq \|C\|\cdot\|e\|\cdot\|P_2(e)\| = N_g \|P_2(e)\|      ;\\
&& (4) \quad\|B[e,Q_2(e)]\|\leq \|B\| \cdot\|e\|\cdot\|Q_2(e)\| = N_f \|Q_2(e)\|  .
\end{eqnarray*}

By Theorem~\ref{lemm-compos}, for any $e\in\partial\B$ we have
\begin{eqnarray*}
 \nonumber   \Psi_e(f\circ g,\lambda,\mu)  &- & \bigl( \Psi_e( f , \lambda, \mu)  + 
                       \Psi_e(  g, \lambda, \mu)\bigr) \\
\nonumber  &=&-   (\lambda-\mu) \left[ \left\langle P_2(e) ,e^* \right\rangle        Q_2(e)  + \left\langle Q_2(e),e^* \right\rangle P_2(e) \right]    \\
&& - \mu C[e, P_2(e)]   -  (\mu-2) B[e,Q_2(e)] .
\end{eqnarray*}

Substitute estimates (1)--(4) and get 
\begin{eqnarray*}
 && \| \Psi_e(f\circ g,\lambda,\mu)- \Psi_e( f , \lambda, \mu) -\Psi_e(  g, \lambda, \mu) \| \\
& &\leq  2 |\lambda-\mu| \|P_2(e)\| \|Q_2(e)\|  
+ |\mu|  N_g \|P_2(e)\|     +   |\mu-2|  N_f \|Q_2(e)\| .
\end{eqnarray*}
Since $P_2(e)=B(e,e)$ and $Q_2(e)=C(e,e)$, the last estimate implies the desired result. 
\end{proof}

\vspace{1mm}

Another issue is to estimate  $\Psi_e(\cdot,\lambda,\mu) $ over the class of bounded mappings. We do it for mappings  of one-dimensional type.

\begin{theorem}\label{propo-1dim}
Let $f\in\Hol(\B,X)$ be a bounded normalized mapping of one-dimensional type, and $M:=\sup_{x\in\B}\|f(x)\| >1$. 
Then
\[
\left\| \Psi_e(f,\lambda,\mu) \right\|  \le \frac{M^2-1}M \max\left\{ 1, \left| \frac{(M^2 -1)\lambda +1}{M}   \right|  \right\}.
\]
\end{theorem}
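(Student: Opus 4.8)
The plan is to reduce the estimate to the classical one-dimensional Fekete--Szeg\"o problem for bounded functions. Since $f$ is of one-dimensional type, write $f(x)=s(x)x$ with $s\in\Hol(\B,\C)$; normalization ($Df(0)=\Id$) forces $s(0)=1$, so expand $s=1+s_1+s_2+\cdots$ into homogeneous polynomials, whence $P_2(x)=s_1(x)x$ and $P_3(x)=s_2(x)x$. By Lemma~\ref{lemm-F-S-norm1} the polynomial $P_2$ is again of one-dimensional type, so Lemma~\ref{lemm-F-S-norm}(A) gives $\Psi_e(f,\lambda,\mu)=\bigl(s_2(e)-\lambda s_1(e)^2\bigr)e$. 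In particular the quantity to be estimated is the scalar $\|\Psi_e(f,\lambda,\mu)\|=|s_2(e)-\lambda s_1(e)^2|$, which is independent of $\mu$.

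Next I would restrict $f$ to the complex line through $e$ and pair against $e^*$. Setting $h(z):=\langle f(ze),e^*\rangle$ for $z\in\D$, homogeneity of the $s_k$ together with $\langle e,e^*\rangle=\|e\|^2=1$ yields $h(z)=zs(ze)=z+s_1(e)z^2+s_2(e)z^3+\cdots$, a normalized holomorphic function on $\D$ whose Taylor coefficients are exactly $a_2=s_1(e)$ and $a_3=s_2(e)$. Hence $\|\Psi_e(f,\lambda,\mu)\|=|a_3-\lambda a_2^2|$, the classical Fekete--Szeg\"o functional of $h$. Moreover, since $e^*\in J(e)$ has $\|e^*\|=1$, we get $|h(z)|\le\|f(ze)\|\cdot\|e^*\|\le M$ for all $z\in\D$, so $h$ is a normalized function bounded by $M$.

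It then remains to prove the sharp bound $|a_3-\lambda a_2^2|\le\frac{M^2-1}{M}\max\{1,|((M^2-1)\lambda+1)/M|\}$ for every normalized $h$ with $|h|\le M$ on $\D$. I would obtain this by the Schur/Schwarz-lemma technique: the function $g:=h/M$ maps $\D$ into $\overline{\D}$ with $g(0)=0$, so $\psi_0(z):=g(z)/z$ is a Schur function (a holomorphic self-map of $\overline{\D}$) with $\psi_0(0)=1/M$. Running one step of the Schur algorithm produces a Schur function $\psi_1=d_0+d_1z+\cdots$ and, writing $\beta:=1-M^{-2}$, the coefficient relations $a_2=M\beta d_0$ and $a_3=M\beta d_1-\beta d_0^2$. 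Substituting and using the Schur constraint $|d_1|\le 1-|d_0|^2$ with the triangle inequality gives $|a_3-\lambda a_2^2|\le M\beta(1-t^2)+\beta|1+\lambda M^2\beta|\,t^2$ for $t=|d_0|\in[0,1]$; since this is affine in $t^2$, its maximum is $\beta\max\{M,|1+\lambda M^2\beta|\}$, and the identity $1+\lambda M^2\beta=(M^2-1)\lambda+1$ together with $\beta=(M^2-1)/M^2$ converts this into the claimed bound.

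The main obstacle is this final, essentially one-dimensional, step: securing the bounded-function Fekete--Szeg\"o estimate with the correct constant. The reductions in the first two paragraphs are routine, but the Schur-algorithm bookkeeping---tracking $\psi_0(0)=1/M$ faithfully through the coefficient relations for $a_2,a_3$ and verifying that the extremal choice of phases is admissible---is where the real work lies; one must also confirm that the affine-in-$t^2$ maximization genuinely produces the two regimes appearing in the $\max$.
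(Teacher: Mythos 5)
Your proof is correct, and after the (identical) first reduction it is structurally the same argument as the paper's, differing only in how the key one-dimensional inequality is secured. Like you, the paper writes $f(x)=s(x)x$ and reduces to bounding $|p_2(e)-\lambda p_1(e)^2|$ (your $|s_2(e)-\lambda s_1(e)^2|$), using Lemma~\ref{lemm-F-S-norm}(A) to kill the $\mu$-dependence. For the main estimate, the paper stays in the Banach variable: it forms the M\"obius transform $\widetilde{s}(x)=\frac{M(1-s(x))}{M^2-s(x)}$ of the multiplier, computes $\widetilde{p}_1=-\frac{M}{M^2-1}p_1$ and $\widetilde{p}_2=-\frac{M}{M^2-1}\bigl(p_2+\frac{1}{M^2-1}p_1^2\bigr)$, invokes \cite[Theorem~2.1]{D-L-21} for $\|\widetilde{p}_2-\gamma\widetilde{p}_1^2\|\le\max\{1,|\gamma|\}$, and translates parameters via $\lambda=-\frac{1+M\gamma}{M^2-1}$. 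Your route slices first, setting $h(z)=\langle f(ze),e^*\rangle=zs(ze)$, and then your Schur step $\psi_1(z)=\frac{\psi_0(z)-1/M}{z\,(1-\psi_0(z)/M)}$ is precisely the paper's M\"obius transform composed with division by $z$: on the slice one has $\widetilde{s}(ze)=-z\psi_1(z)$, so your $(d_0,d_1)$ are $(-\widetilde{p}_1(e),-\widetilde{p}_2(e))$, and your Schwarz--Pick bound $|d_1|\le 1-|d_0|^2$ followed by the affine-in-$t^2$ maximization is exactly the standard proof of the inequality the paper cites. Your bookkeeping is right: $a_2=M\beta d_0$, $a_3=M\beta d_1-\beta d_0^2$ with $\beta=1-M^{-2}$, and $1+\lambda M^2\beta=(M^2-1)\lambda+1$ reproduce the stated constant (the degenerate case $\psi_0\equiv 1/M$ gives $a_2=a_3=0$ and is harmless). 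What each approach buys: yours is self-contained and elementary, needing nothing beyond the Schwarz lemma, whereas the citation to D{\l}ugosz--Liczberski lets the paper finish in two lines and obtain the sup-norm statement $\|T\|\le\max\{1,|\gamma|\}$ over all of $\partial\B$ at once rather than pointwise at $e$ (immaterial here, since $\Psi_e$ is evaluated at a fixed $e$ anyway). Your closing worry about extremal phases is moot: the theorem asserts no sharpness, so the triangle inequality suffices.
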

\begin{proof}
By our assumption there exists $s \in\Hol(\B,\C)$ such that  $f(x)=s(x)x$ with $s(x)=1+p_1(x)+p_2(x)+\ldots,$ and $M=\sup_{x\in\B}|s(x)|.$  Denote $\widehat{s}(x):=\frac1M s(x)$ and $\widetilde{s}(x):=\displaystyle\frac{\frac1M -\widehat{s}(x)}{1- \frac1M  \widehat{s}(x)}= \frac{M(1- s(x))} {M^2-s(x)}$. Then $\widetilde{s}$ can be represented by the series of homogeneous polynomials $\displaystyle \widetilde{s}(x)=\sum_{j=1}^\infty \widetilde{p}_j(x),$ where
\[
\widetilde{p}_1(x) = -\frac{M}{M^2-1}p_1(x) \quad \mbox{and} \quad \widetilde{p}_2(x) = - \frac{M}{M^2-1} \left( \frac{1}{M^2-1}p_1^2(x)+ p_2(x) \right)\!.
\]
Given $\gamma\in\C$, consider the homogeneous polynomial of order two
\[
T(x)=\widetilde{p}_2(x) -\gamma\left( \widetilde{p}_1(x) \right)^2 = \frac{-M}{M^2-1} \left( p_2(x) + \frac{1+M\gamma} {M^2-1}\, p_1(x)^2   \right) \! .
\]

According to \cite[Theorem~2.1]{D-L-21}, we have
\[
\|T\| := \sup_{\|x\|=1} |T(x)| \le \max\left\{ 1,|\gamma|\right\}.
\]
Denote $\lambda=-\frac{1+M\gamma}{M^2-1}$. Then 
\[
\frac{M}{M^2-1}\left| p_2(x) - \lambda p_1^2(x)   \right| \le \max\left\{ 1, \left| \frac{(M^2-1)\lambda +1}{M}  \right| \right\}\!.
\]
Since $\Psi_e(f,\lambda,\mu)=\Psi_e(f,\lambda)=\left[ p_2(e) - \lambda p_1^2(e) \right]e$, the result follows.
\end{proof}

\medskip

\section{Fekete--Szeg\"o mapping and class $\m$}\label{sect-m}
\setcounter{equation}{0}

In this section we study the Fekete--Szeg\"o mapping $\Psi_e(\cdot,\lambda,\mu)$ over the class  
\begin{eqnarray*}
 \m\!  : =\! \left\{h\in\A(\B)\!: \Re \left\langle h(x), x^* \right\rangle\ge0 \quad\mbox{for all } x\in\B\!\setminus\!\!\{0\},\ x^*\in J(x) \right\}
\end{eqnarray*}
(see, for example, \cite{GK2003} and references therein). We assume that $h\in\m$ is represented by the series of homogeneous polynomials  
\begin{equation}\label{series-3}
  h(x)=x+\sum_{k=2}^{\infty} H_k(x).
  \end{equation}

An important property of the class $\m$ is that it consists of  {\it semigroup generators} in the open unit ball $\B$. This means that the initial value problem 
\begin{equation}\label{sem}
\left\{  \begin{array}{l}
           \displaystyle\frac{\partial u_t(x)}{\partial t} +h(u_t(x))=0, \vspace{2mm} \\
           u_0(x)=x 
         \end{array}                \right.
\end{equation} 
has the unique solution $u_t(x) \in\B$ for all $x \in\B$ and $t\ge 0$. The family $\left\{u_t\right\}_{t\ge0}\subset\Hol(\B,\B)$ forms the semigroup of holomorphic self-mappings of~$\B$, see \cite{E-R-S04, E-R-S-19, R-Sbook}.

Given $t>0$ consider the semigroup element $u_t$. Being a holomorphic mapping, it can be represented by the series of homogenous polynomials. 
We prove that the polynomial of degree three in this representation is proportional to the Fekete--Szeg\"o mapping of $h$.

 \begin{theorem}\label{th-semigr}
Let $u$ be the semigroup generated by $h\in\m$. Then for every $e\in\partial\B$ and every $t\ge0$ we have
\begin{eqnarray*}
 \frac1{2!} D^2u_t(0)[e^2] &=& \exp(-t)(\exp(-t)-1)\cdot H_2(e),\\
\frac{1}{3!}D^3u_t(0)[e^3] &=& \frac{ \exp(-t)(\exp(-2t)-1)}{2}\cdot\Psi_e(h, \lambda, \lambda),
\end{eqnarray*}
where $\lambda=\frac{2(1-\exp(-t))}{1+\exp(-t)} $.
 \end{theorem}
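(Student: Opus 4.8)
The plan is to expand the semigroup element $u_t$ into its series of homogeneous polynomials $u_t(x)=\sum_{k\ge1}V_k(t)(x)$, where $V_k(t)(x)=\frac1{k!}D^k u_t(0)[x^k]$, to substitute this series into the generation equation~\eqref{sem}, and to match terms of equal homogeneity. Since $u_0=\Id$, the initial data are $V_1(0)=\Id$ and $V_k(0)=0$ for $k\ge2$. Because $h(y)=y+\sum_{m\ge2}H_m(y)$, equating degrees produces a triangular hierarchy of linear ODEs in $t$: the equation for $V_k$ involves only $V_1,\dots,V_k$, so I would solve them successively for $k=1,2,3$, each time using the integrating factor $\exp(t)$.

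First I would treat the linear part. Collecting degree-one terms in $-h(u_t)$ gives $\partial_t V_1=-V_1$, whence $V_1(t)(x)=\exp(-t)\,x$. For the degree-two balance, the only degree-two contribution of $h(u_t)$ beyond $V_2$ comes from $H_2(V_1(t))=\exp(-2t)H_2$, so that $\partial_t V_2+V_2=-\exp(-2t)H_2$ with $V_2(0)=0$; integrating yields $V_2(t)=\exp(-t)(\exp(-t)-1)H_2$, which is precisely the first asserted identity.

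The core of the argument is the degree-three balance. Writing $B_h:=\frac12 D^2h(0)$ so that $H_2(y)=B_h[y,y]$, the degree-three part of $H_2(u_t)=B_h[u_t,u_t]$ is the cross term $2B_h[V_1(t),V_2(t)]$, which equals $2\exp(-2t)(\exp(-t)-1)\,B_h[x,H_2(x)]$, while $H_3(u_t)$ contributes $\exp(-3t)H_3$ and the polynomials $H_m$ with $m\ge4$ contribute nothing in degree three. Solving the resulting linear ODE for $V_3$ with $V_3(0)=0$ gives
\[
V_3(t)(x)=\exp(-t)(\exp(-t)-1)^2\,B_h[x,H_2(x)]+\frac{\exp(-t)(\exp(-2t)-1)}2\,H_3(x).
\]
Evaluating at $e$ and recalling from Definition~\ref{def-FS-oper} that $\Psi_e(h,\lambda,\lambda)=H_3(e)-\frac\lambda2 D^2h(0)[e,H_2(e)]=H_3(e)-\lambda\,B_h[e,H_2(e)]$, it remains to choose $\lambda$ making $V_3(t)(e)$ proportional to $H_3(e)-\lambda\,B_h[e,H_2(e)]$. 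The coefficients of $H_3(e)$ already agree, and matching the coefficients of $B_h[e,H_2(e)]$ forces
\[
\lambda=-\frac{2(\exp(-t)-1)^2}{\exp(-2t)-1}=\frac{2(1-\exp(-t))}{1+\exp(-t)},
\]
where the last equality uses $\exp(-2t)-1=(\exp(-t)-1)(\exp(-t)+1)$; this produces the second asserted identity.

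I expect the main obstacle to be the careful extraction of the degree-three part of $h(u_t)$ --- in particular, correctly isolating the cross term $2B_h[V_1(t),V_2(t)]$ via bilinearity of $B_h$ together with its $t$-dependent scalar factor --- and the subsequent bookkeeping when integrating the linear ODE for $V_3$. Once the factorization of $\exp(-2t)-1$ is invoked, the final identification of $\lambda$ is a routine simplification.
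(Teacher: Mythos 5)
Your proposal is correct and follows essentially the same route as the paper: substituting the homogeneous expansion of $u_t$ into the generation equation \eqref{sem}, equating degrees to obtain a triangular system of linear ODEs with zero initial data for $k\ge 2$, solving via the integrating factor $\exp(t)$, and identifying $\lambda$ through the factorization $\exp(-2t)-1=(\exp(-t)-1)(\exp(-t)+1)$. The only cosmetic difference is that the paper first factors out $\exp(-t)$, writing $u_t(x)=\exp(-t)\bigl[x+\sum_{k\ge2}S_k(t,x)\bigr]$, whereas you work with the raw coefficients $V_k(t)$; the computations are otherwise identical.
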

 Recall that $\Psi_e(f, \lambda, \lambda)=\Psi_e^{(1)}(f,\lambda)$, see formulae \eqref{FS-previ} and \eqref{FS-oper1}.  
 \begin{proof}
It is known that $Du_t(0)[x]=\exp(-t)$ for any $t>0$. Represent $u_t$ by the series $ u_t(x)=\exp(-t) \left[ x  + \sum\limits_{k=2}^{\infty} S_k(t,x)\right],$  
 where $S_k, k\ge2,$ is a homogeneous polynomial of degree $k$. Substitute the series for $u_t$ and $h$ in \eqref{sem} and get
  \begin{eqnarray*}
     &&- \exp(-t) \left[ x  + S_2(t,x) + S_3(t,x) \right]  + \exp(-t) \left[ \frac{\partial}{\partial t}S_2(t,x) +\frac{\partial}{\partial t} S_3(t,x) \right]   \\
     &&+ \exp(-t) \left[ x  + S_2(t,x) + S_3(t,x) \right] + \frac{1}{2!} D^2 h(0)\left[ \left( \exp(-t)(x+S_2(t,x))  \right)^2  \right] \\
     &&  +\frac1{3!} D^3 h(0)\left[ \left( \exp(-t)x \right)^3  \right]= o(\|x\|^3) . 
  \end{eqnarray*}
  This enables us (because every Fr\'{e}chet derivative of order $k$ is a bounded symmetric $k$-linear operator)  to equate terms of the same degree. So,
  \begin{eqnarray*}
     &&  \exp(-t)  \frac{\partial}{\partial t}S_2(t,x)   + \frac{1}{2!} \exp(-2t) D^2 h(0)\left[ x^2  \right] =0,\\
     &&  \exp(-t) \frac{\partial}{\partial t} S_3(t,x)  + \exp(-2t) D^2 h(0)\left[ x, S_2(t,x) \right]\\
     && \hspace{4cm} +\frac1{3!}\exp(-3t) D^3 h(0)\left[ x ^3  \right]= 0 .
  \end{eqnarray*} 
  The solution of the first equation is $S_2(t,x)= \frac{1}{2!} (\exp(-t)-1) D^2 h(0)\left[ x^2  \right] $. Substitute it to the second one:
   \begin{eqnarray*}
     \frac{\partial}{\partial t} S_3(t,x)  + \exp(-t)(\exp(-t)-1) D^2 h(0)\left[ x, \frac{1}{2!} D^2 h(0)\left[ x^2  \right] \right] \\
     +\frac1{3!}\exp(-2t) D^3 h(0)\left[ x ^3  \right]= 0 .
   \end{eqnarray*} 
   Therefore, 
   \begin{eqnarray*}
      S_3(t,x) &=& \frac12(\exp(-2t)-1) \frac{1}{3!} D^3 h(0)[x^3] \\
      &+& (\exp(-t)-1)^2\cdot\frac14D^2 h(0)\left[x,D^2h(0)[x^2]\right] \\
       &=&  \frac12(\exp(-2t)-1) \left[ H_3(x) - \frac{1-\exp(-t)}{1+\exp(-t)} D^2 h(0)\left[x,H_2(x)\right]  \right],
   \end{eqnarray*}
   which completes the proof.   
 \end{proof}
 
To explain another important feature of this class,  recall that a  biholomorphic mapping $f\in \A(\B)$ is said to be starlike (see \cite{E-R-S04, GK2003, R-Sbook, STJ-77})  if $tw\in f(\B)$ for every $w\in f(\B)$ and $t\in[0,1]$. The set of all starlike mappings is denoted by $\Sta$.
The following fact is well known (see Proposition~2.5.3 and corresponding references in \cite{E-R-S04}).
\begin{propo}\label{propo-star}
  Let $f\in\Hol(\B,X)$ be a normalized and locally biholomorphic mapping. Then $f\in\Sta$  if and only if there is a unique mapping $h\in\m$ such that $Df(x)[h(x)]=f(x)$  in $\B$.
\end{propo}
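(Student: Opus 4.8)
The plan is to prove the two implications separately, noting first that uniqueness is immediate: since $f$ is locally biholomorphic, $Df(x)$ is invertible at every $x\in\B$, so the relation $Df(x)[h(x)]=f(x)$ forces $h(x)=\left(Df(x)\right)^{-1}[f(x)]$. Hence at most one mapping $h$ can satisfy it, and it only remains to establish existence together with membership in $\m$.

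For the implication $f\in\Sta\Rightarrow$ existence of $h$, I would exploit starlikeness to manufacture a one-parameter semigroup on $\B$ and then read off its generator. As $f$ is biholomorphic onto the starlike domain $\Omega:=f(\B)$, the dilations $w\mapsto\exp(-t)\,w$ map $\Omega$ into itself for every $t\ge0$ and form a semigroup on $\Omega$. Pulling this back through $f$, I set $u_t(x):=f^{-1}\!\left(\exp(-t)f(x)\right)$, obtaining a family $\{u_t\}_{t\ge0}\subset\Hol(\B,\B)$ that fixes the origin with $Du_t(0)=\exp(-t)\Id$. By the general theory of one-parameter semigroups of holomorphic self-mappings of the ball (see \cite{E-R-S04, E-R-S-19, R-Sbook}), this semigroup has an infinitesimal generator $h\in\m$ characterized by $h(x)=-\left.\frac{\partial}{\partial t}\right|_{t=0}u_t(x)$. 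Differentiating the defining identity $f(u_t(x))=\exp(-t)f(x)$ in $t$ at $t=0$ and using the chain rule yields $Df(x)[-h(x)]=-f(x)$, i.e. $Df(x)[h(x)]=f(x)$, as required.

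For the converse, suppose $h\in\m$ satisfies $Df(x)[h(x)]=f(x)$, and let $\{u_t\}_{t\ge0}$ be the semigroup generated by $h$ via \eqref{sem}. I would introduce the $X$-valued function $v(t,x):=f(u_t(x))$ and differentiate in $t$. Using the generator equation $\frac{\partial}{\partial t}u_t=-h(u_t)$ together with the hypothesis, one computes $\frac{\partial}{\partial t}v=Df(u_t)\bigl[-h(u_t)\bigr]=-f(u_t)=-v$, so that $v(t,x)=\exp(-t)f(x)$. Consequently $\exp(-t)f(x)\in f(\B)$ for every $t\ge0$ and $x\in\B$; since $\exp(-t)$ ranges over $(0,1]$, the image $f(\B)$ is starlike with respect to the origin, and therefore $f\in\Sta$.

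The main obstacle lies in the forward implication: one must verify that the pulled-back family $\{u_t\}$ is genuinely a continuous one-parameter semigroup possessing a well-defined infinitesimal generator that belongs to $\m$, rather than a merely formal family of maps. This rests on the standard---but by no means trivial---regularity and accretivity theory for semigroups in the unit ball of a Banach space; granting those results, the identification of $h$ and the verification that $\Re\langle h(x),x^*\rangle\ge0$ constitute the substantive point, while the chain-rule differentiations in both directions are routine.
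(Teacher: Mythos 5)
Your overall skeleton is the standard one (the paper itself gives no proof, quoting the statement as \cite[Proposition~2.5.3]{E-R-S04}), and your forward direction is essentially sound: uniqueness via invertibility of $Df(x)$, the explicit pulled-back family $u_t=f^{-1}\bigl(\exp(-t)f\bigr)$ (well defined on $\B$ precisely because starlikeness gives $\exp(-t)f(\B)\subseteq f(\B)$), the computation $Du_t(0)=\exp(-t)\Id$ so that the generator is normalized, and the appeal to the Reich--Shoikhet characterization of generators with a null point at the origin as the holomorphically accretive class $\m$ \cite{E-R-S04, E-R-S-19, R-Sbook}.

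The genuine gap is in the converse. By the paper's definition, membership in $\Sta$ requires $f$ to be \emph{biholomorphic}, whereas the hypothesis grants only \emph{local} biholomorphy; your argument via $v(t,x)=f(u_t(x))$ proves only that $\exp(-t)f(x)\in f(\B)$, i.e.\ starlikeness of the image, and the concluding ``therefore $f\in\Sta$'' silently assumes univalence. Establishing univalence from the differential relation is exactly the nontrivial content of Suffridge's criterion \cite{STJ-77}, and it must be argued: from the identity $f(u_t(x))=\exp(-t)f(x)$, if $f(x_1)=f(x_2)$ then $f(u_t(x_1))=f(u_t(x_2))$ for all $t\ge0$; since $h\in\m$ is normalized ($Dh(0)=\Id$), the semigroup is attracted to the origin uniformly on each ball $\|x\|\le r<1$ (with exponential rate, see \cite{E-R-S04, R-Sbook}), so for $t$ large both $u_t(x_1)$ and $u_t(x_2)$ lie in a neighborhood of $0$ on which $f$ is injective by local biholomorphy, whence $u_t(x_1)=u_t(x_2)$; finally, backward uniqueness of solutions to the Cauchy problem \eqref{sem} (the right-hand side is holomorphic, hence locally Lipschitz) forces $x_1=x_2$. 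Without this step --- or some substitute for it --- the converse implication as you stated it does not deliver $f\in\Sta$, only starlikeness of a possibly non-univalent image.
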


It turns out that the Fekete--Szeg\"o mappings of $f$ and $h$ can be expressed one by another. Moreover, this allows us to obtain sharp estimates on the Fekete--Szeg\"o mapping over the class $\m$.

 \begin{theorem}\label{thm-classM}
   Let $h\in\m$ and $f$ be the unique normalized solution to the equation $Df(x)[h(x)]=f(x)$. Then for any $e \in \partial \B$ and $\lambda, \mu \in \C$ we have
      \begin{equation}\label{Psi-f-h}
\Psi_e\left(h,2\lambda,2\mu\right)=-2\Psi_e(f,1-\lambda,1-\mu).
 \end{equation}    
 Moreover, 
     \begin{equation}\label{FSfor-h}
\left|\left\langle\Psi_e\left(h,\lambda,0\right),e^*\right\rangle\right| \le 2\max\left\{1 ,\left|2\lambda-1\right| \right\} \quad \text{for any } \lambda \in \C.
 \end{equation}
  In addition, if the homogeneous polynomials $H_2$ and $H_3$ in \eqref{series-3} are of one-dimensional type, then 
\begin{equation}\label{FSfor-h1}
\left\|\Psi_e\left(h,\lambda,\mu\right)\right\| \le 2\max\left\{1 ,\left|2\lambda-1\right| \right\} \quad \text{for any } \lambda, \mu \in \C.
 \end{equation}
 Both estimates are sharp.
 \end{theorem}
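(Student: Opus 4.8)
The plan is to treat the identity \eqref{Psi-f-h} and the two estimates separately, using the relation $Df(x)[h(x)]=f(x)$ of Proposition~\ref{propo-star} to pass from $h$ to its starlike companion $f$, and then reducing the scalar bound to the one-variable Carath\'eodory theory via a slice.

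For \eqref{Psi-f-h}, I would write $f(x)=x+\sum_{k\ge2}P_k(x)$, substitute both series into $Df(x)[h(x)]=f(x)$, and match homogeneous terms. Degree two yields $P_2=-H_2$ (equivalently $\frac12 D^2f(0)=-\frac12 D^2h(0)$), and degree three yields $P_3(e)=\frac12 D^2h(0)[e,H_2(e)]-\frac12 H_3(e)$. Plugging these into Definition~\ref{def-FS-oper} rewrites $\Psi_e(f,1-\lambda,1-\mu)$ as a linear combination of the three quantities $H_3(e)$, $\frac12 D^2h(0)[e,H_2(e)]$ and $\langle H_2(e),e^*\rangle H_2(e)$; a direct comparison of coefficients with those of $\Psi_e(h,2\lambda,2\mu)$ produces \eqref{Psi-f-h}. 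This step is routine but the most error-prone, since one must carry the sign $P_2=-H_2$ consistently through every term.

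For the scalar estimate \eqref{FSfor-h}, the key idea is a slice reduction to the Carath\'eodory class $\mathcal P$. Fixing $e\in\partial\B$, $e^*\in J(e)$, set $\phi(z):=\langle h(ze),e^*\rangle$ for $z\in\D$; this is holomorphic since $\|ze\|=|z|<1$, and homogeneity of the $H_k$ gives $\phi(z)=z+\sum_{k\ge2}\langle H_k(e),e^*\rangle z^k$. Because $\bar z\,e^*\in J(ze)$, membership $h\in\m$ forces $\Re\bigl(\bar z\,\phi(z)\bigr)\ge0$, so $p(z):=\phi(z)/z=1+c_1z+c_2z^2+\cdots$ lies in $\mathcal P$, with $c_1=\langle H_2(e),e^*\rangle$ and $c_2=\langle H_3(e),e^*\rangle$. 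By \eqref{FS-previ} and \eqref{aj} one has $\langle\Psi_e(h,\lambda,0),e^*\rangle=\psi_e^{(1)}(h,\lambda)=c_2-\lambda c_1^2$, so \eqref{FSfor-h} is precisely the classical sharp inequality $|c_2-\lambda c_1^2|\le 2\max\{1,|2\lambda-1|\}$ for functions in $\mathcal P$.

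The norm bound \eqref{FSfor-h1} then follows at once: if $H_2$ and $H_3$ are of one-dimensional type, Lemma~\ref{lemm-F-S-norm} shows $\Psi_e(h,\lambda,\mu)$ is independent of $\mu$ and equals $\langle\Psi_e(h,\lambda,0),e^*\rangle\,e$, whence $\|\Psi_e(h,\lambda,\mu)\|=|\langle\Psi_e(h,\lambda,0),e^*\rangle|$ and \eqref{FSfor-h} applies. For sharpness I would realize the extremal Carath\'eodory functions as slices of genuine generators: for $p\in\mathcal P$ the one-dimensional-type mapping $h(x)=p(\langle x,e^*\rangle)\,x$ is normalized, belongs to $\m$, and has exactly the coefficients $c_1,c_2$ above. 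The choice $p(z)=\frac{1+z}{1-z}$ (so $c_1=c_2=2$) saturates the bound when $|2\lambda-1|\ge1$, and $p(z)=\frac{1+z^2}{1-z^2}$ (so $c_1=0$, $c_2=2$) saturates it when $|2\lambda-1|\le1$; as these $h$ are of one-dimensional type, the same examples establish sharpness of both \eqref{FSfor-h} and \eqref{FSfor-h1}. The only genuine difficulty lies in the coefficient algebra of the first step; the rest reduces to the scalar theory once the slice is set up.
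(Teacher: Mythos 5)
Your proposal is correct, and for the estimate part it takes a genuinely different route from the paper. The identity \eqref{Psi-f-h} you obtain essentially as the paper does: you re-derive the coefficient relations $P_2=-H_2$ and $P_3(e)=\frac12 D^2h(0)[e,H_2(e)]-\frac12 H_3(e)$ by matching homogeneous terms in $Df(x)[h(x)]=f(x)$ (via Euler's identity $DP_k(x)[x]=kP_k(x)$), whereas the paper simply quotes these relations, i.e.\ \eqref{H23-P23}, from the proof of Theorem~3.1 in the Hamada--Kohr--Kohr paper; the algebra after substitution is the same and your coefficients check out. The real divergence is in \eqref{FSfor-h}: the paper deduces it \emph{from} the identity \eqref{Psi-f-h} combined with the sharp bound $|\psi_e^{(2)}(f,\lambda)|\le\max\{1,|4\lambda-3|\}$ for starlike $f$ (citing Corollary~3.4 of that same paper, and using $\psi_e^{(2)}(f,\lambda)=\langle\Psi_e(f,\lambda,1),e^*\rangle$ from \eqref{FS-previ}), so its sharpness is inherited from the starlike setting; you instead bypass $f$ entirely by slicing $h$ itself, noting $\bar z e^*\in J(ze)$ so that $p(z)=\langle h(ze),e^*\rangle/z$ lies in the Carath\'eodory class, and then invoking the classical sharp bound $|c_2-\lambda c_1^2|\le 2\max\{1,|2\lambda-1|\}$ for that class --- a correct and self-contained reduction which makes \eqref{FSfor-h} independent of the identity \eqref{Psi-f-h} and of the external starlike estimate. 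Your sharpness argument is also more explicit than the paper's: realizing the extremal Carath\'eodory functions $\frac{1+z}{1-z}$ and $\frac{1+z^2}{1-z^2}$ as one-dimensional-type generators $h(x)=p(\langle x,e^*\rangle)x\in\m$ (which indeed satisfies $\Re\langle h(x),x^*\rangle=\|x\|^2\Re p(\langle x,e^*\rangle)\ge0$) simultaneously establishes sharpness of \eqref{FSfor-h} and of \eqref{FSfor-h1}, while the paper leaves sharpness implicit in the cited corollary. The passage from \eqref{FSfor-h} to \eqref{FSfor-h1} via Lemma~\ref{lemm-F-S-norm} is identical in both treatments. In short: same computation for the identity, but your slice argument buys a more elementary, self-contained proof of the scalar estimate with explicit extremals, at the modest cost of invoking (without proof) the standard Carath\'eodory-class inequality, which is easily verified from $|c_2-\frac{c_1^2}{2}|\le 2-\frac{|c_1|^2}{2}$ and $|c_1|\le2$.
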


\begin{proof}
Recall that for every $h\in\m$ there exists the unique mapping  $f\in \Sta$ such that $Df(x)[h(x)]=f(x)$  in $\B$ by \cite[Proposition~3.7.5]{E-R-S04}. 

Let $f$ and $h$ be represented by the series \eqref{series} and \eqref{series-3}, respectively.  
Fix $e \in \partial \B$. It was shown in the proof of Theorem~3.1 in \cite{HKK2021} that 
 \begin{eqnarray}\label{H23-P23}
\begin{array}{l}
   \displaystyle  P_2(e)=-\frac{1}{2!}D^2h(0)[e^2] \qquad \text{ and }\vspace{3mm} \\
\displaystyle P_3(e)=\frac{1}{3!}\left(  -\frac{1}{2}D^3h(0)[e^3]+\frac{3}{2}D^2h(0)\left[e,D^2h(0)[e^2] \right]\right)\!.
\end{array}
 \end{eqnarray} 
For any $\lambda,\mu\in\C,$  and $e^* \in J(e)$, Definition~\ref{def-FS-oper} implies
\begin{eqnarray*}
 \Psi_e(f,\lambda,\mu)&=& P_3(e)       - \frac\mu2 D^2f(0)\left[ e, P_2(e) \right] 
           -  (\lambda-\mu) \left\langle P_2(e),e^* \right\rangle P_2(e)\\
 &=& \frac{1}{3!} \left(-\frac{1}{2}D^3h(0)[e^3]+\frac{3}{2}D^2h(0)\left[e,D^2h(0)[e^2] \right]\right)\\
 &&+    \frac{\mu}{2}D^2h(0)\left[e,-\frac{1}{2!}D^2h(0)[e^2] \right] \\
 &&- (\lambda - \mu) \left\langle -\frac{1}{2!}D^2h(0)[e^2], e^* \right\rangle  \left( -\frac{1}{2!}D^2h(0)[e^2] \right)\!.
  \end{eqnarray*}
Consequently,
\begin{eqnarray*}
 2\Psi_e(f,1-\lambda,1-\mu) &=& -\frac{1}{3!} D^3h(0)[e^3]+2\mu\frac{1}{2!}D^2h(0)\left[e,\frac{1}{2!}D^2h(0)[e^2] \right]\\
&&-2 (\mu-\lambda) \left\langle \frac{1}{2!}D^2h(0)[e^2], e^*  \right\rangle\frac{1}{2!} D^2h(0)[e^2]\\
&=&-\Psi_e\left(h,2\lambda,2\mu\right).
  \end{eqnarray*}
  So, the first statement follows. 

Because $f\in\A(\B)$ is a starlike mapping, the estimate 
   \[
   \left| \psi_e^{(2)}(f,\lambda) \right|  \le \max\left\{1 ,\left|4\lambda-3\right| \right\}
   \]  
holds and is sharp by  \cite[Corollary 3.4]{HKK2021}. Here $ \psi_e^{(2)}(f,\lambda)   =  \left\langle \Psi_e(f,\lambda,1) ,e^*\right\rangle$, see~\eqref{FS-previ}.

Thanks to the proved equality \eqref{Psi-f-h}, we have  
\begin{equation*}
\left|\left\langle\Psi_e\left(h,2\lambda,0\right),e^*\right\rangle\right|=2\left|\left\langle\Psi_e(f,1-\lambda,1),e^*\right\rangle\right| \le 2 \max\left\{1 ,\left|1-4\lambda\right| \right\}.
 \end{equation*}
 Thus estimate \eqref{FSfor-h} holds and is sharp.

If the homogeneous polynomial $H_2$ and $H_3$ are of one-dimensional type, one concludes that estimate \eqref{FSfor-h1} holds and is sharp by assertion (B) in Lemma~\ref{lemm-F-S-norm}.
\end{proof}

\begin{remar}\label{rem1a}
 It is worth mentioning that formulas~\eqref{H23-P23} imply that the polynomials $P_2$ and $P_3$
 are of one-dimensional type if and only if $H_2$ and $H_3$~are. 
\end{remar}

For a given point $e\in\partial\B$,  let us denote by $\m_e$  the subclass of $\m$ consisting of mappings satisfying the estimate \eqref{FSfor-h1} for all $\lambda,\mu\in\C$. Also denote by $\St$ the class of biholomorphic mappings $f$ such that $(Df)^{-1}[f]\in\m_e$. 
Observe that the classes $\m_e$ and $\St$ are not empty. Indeed, if the homogeneous polynomials of the second and third order of a mapping $h\in \m$ (of $f\in\Sta$) are of one-dimensional type, then by Theorem~\ref{thm-classM}, $h\in\m_e$ and $f\in\St$. 
  
\begin{corol}
  Let $f\in\St$. Then 
  \begin{equation}\label{FS-for-Ste}
 \left\| \Psi_e(f,\lambda,\mu) \right\|  \le \max\left\{1 ,\left|4\lambda-3\right| \right\}.
  \end{equation}
 \end{corol}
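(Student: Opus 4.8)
The plan is to reduce the estimate for $f\in\St$ to the already-established estimate \eqref{FSfor-h1} for the associated generator, using the key identity \eqref{Psi-f-h} of Theorem~\ref{thm-classM}. First I would unwind the definition of $\St$: to say $f\in\St$ means exactly that the generator $h:=(Df)^{-1}[f]$—equivalently the unique mapping in $\m$ with $Df(x)[h(x)]=f(x)$, cf.\ Proposition~\ref{propo-star}—belongs to $\m_e$. Hence $h$ obeys
\[
\left\|\Psi_e(h,\lambda,\mu)\right\|\le 2\max\left\{1,\left|2\lambda-1\right|\right\}\qquad\text{for all }\lambda,\mu\in\C.
\]

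Next I would invert the relation \eqref{Psi-f-h}. Writing that identity as $\Psi_e(h,2\lambda,2\mu)=-2\Psi_e(f,1-\lambda,1-\mu)$ and performing the substitution $\lambda\mapsto 1-\lambda$, $\mu\mapsto 1-\mu$, one obtains
\[
\Psi_e(f,\lambda,\mu)=-\tfrac12\,\Psi_e\left(h,2-2\lambda,2-2\mu\right),
\]
so that $\left\|\Psi_e(f,\lambda,\mu)\right\|=\tfrac12\left\|\Psi_e(h,2-2\lambda,2-2\mu)\right\|$.

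Finally I would feed the first argument $2-2\lambda$ into the bound for $h$. Since $|2(2-2\lambda)-1|=|3-4\lambda|=|4\lambda-3|$, the estimate for $h\in\m_e$ gives $\left\|\Psi_e(h,2-2\lambda,2-2\mu)\right\|\le 2\max\{1,|4\lambda-3|\}$, and halving yields exactly \eqref{FS-for-Ste}.

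I do not anticipate a genuine obstacle here, as the substantive content is carried entirely by Theorem~\ref{thm-classM} and the definitions of $\m_e$ and $\St$; the only point to watch is the parameter bookkeeping, namely the map $\lambda\mapsto 2-2\lambda$ in the first slot. Note that the second parameter $\mu$ drops out of the final bound, which is consistent with the right-hand side of \eqref{FS-for-Ste} being independent of $\mu$. Sharpness, if one wishes to record it, would follow by taking $f\in\St$ whose second- and third-order homogeneous polynomials are of one-dimensional type, where \eqref{FSfor-h1} is attained.
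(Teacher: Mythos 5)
Your proof is correct and is essentially the paper's own argument: the corollary is stated there as a direct consequence of Theorem~\ref{thm-classM} and the definitions of $\m_e$ and $\St$, and your substitution $\lambda\mapsto 1-\lambda$, $\mu\mapsto 1-\mu$ in \eqref{Psi-f-h}, giving $\Psi_e(f,\lambda,\mu)=-\frac12\Psi_e(h,2-2\lambda,2-2\mu)$ and then invoking \eqref{FSfor-h1} with $|2(2-2\lambda)-1|=|4\lambda-3|$, is exactly how that consequence unwinds. The parameter bookkeeping, including the observation that $\mu$ drops out of the bound, is accurate.
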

Obviously  $\St\subset\Sta$. Thanks to Lemmas~\ref{lemm-F-S-norm1} and~\ref{lemm-F-S-norm} and Remark~\ref{rem1a}, this corollary partially generalizes results in \cite{La-X-21, E-J-22a}.    In this connection, it is natural to ask 
\begin{itemize}
  \item [$*$] Does $\St$ consist of mappings such that the homogeneous polynomials of the second and third order in their expansion are of one-dimensional type only?
  \item [$*$] If not, does the relation $\St=\Sta$ hold?
\end{itemize}

In the next examples we consider the two-dimensional Euclidian space $X=\C^2$ and show that the answer to both questions is negative.

\begin{examp}
Let $f\in\Hol(\B,X)$ be defined by
\begin{equation}\label{f1}
f(x_1,x_2) = \left( x_1-\frac{x_1^2}{2+x_1} , x_2 -\frac{x_1^2}{2+x_2}   \right)\! .
\end{equation}
In order to justify that $f\in\Sta$, we first find the mapping $h:=(Df)^{-1}f$ by a direct computation:
\begin{equation*}
h(x_1,x_2) = \left( x_1+\frac{x_1^2}{2} , x_2+ \frac{x_1^2}{2}   \right)\! .
\end{equation*}
Then
\[
\Re \left\langle h(x), x \right\rangle \ge \|x\|^2 - \frac{|x_1|^2(|x_1|+|x_2|)}{2}.
\]
One can easily see that this expression is non-negative in $\B$. Hence $h \in \m$. Therefore $f$ is starlike by Proposition~\ref{propo-star}. 

It follows from \eqref{f1} that $P_2(x)=-\frac{x_1^2}{2}\left(1,1\right) $  and $P_3(x)= \frac{x_1^3}{4}\left(1,1\right) $. Meanwhile, this implies $\frac{1}{2} D^2f(0)[u,v] =-\frac{u_1v_1}{2}(1,1).$ Clearly, polynomials $P_2$ and $P_3$ are not of one-dimensional type. At the same time, taking $e=(1,0)$ we get
\[
\Psi_e(f,\lambda,\mu)=\frac{1}{4}\left(1,1\right)  -\frac{\mu}{4}\left(1,1\right) -\frac{\lambda-\mu}{4} \left(1,1\right) =\frac{1-\lambda}{4}\left(1,1\right),
\]
so that $\left\| \Psi_e(f,\lambda,\mu) \right\|=\frac{|1-\lambda|}{2\sqrt2}$ and estimate \eqref{FS-for-Ste} is satisfied. Thus $f\in\St$.
\end{examp}

\begin{examp}
Now we consider the mapping $f\in\Hol(\B,X)$  defined by
\begin{equation*}\label{f2}
f(x_1,x_2) = \left( x_1  , x_2 +(x_1+x_2)\left(e^{-x_1} - 1\right)  \right)\! .
\end{equation*}
Similarly to the previous example one checks that $f\in\Sta$. Its homogeneous polynomials of the second and third order are $P_2(x)=\left(0, -x_1(x_1+x_2) \right)$ and $P_3(x)= \left(0, \frac{x_1^2}2(x_1+x_2) \right)$. 

This leads to $\frac{1}{2} D^2f(0)[u,v] =-\left(0,u_1v_1 + \frac{u_1v_2+u_2v_1}{2}\right).$  Take again $e=(1,0)\in\partial\B$. Then
\[
\Psi_e(f,\lambda,\mu)=\left(0,\frac{1}{2} \right)  -\left(0, \frac{\mu}{2}\right)  - (\lambda-\mu)\cdot0\cdot \left(0,-1\right) =\left(0,\frac{1-\mu}{2}\right)\!.
\]
Therefore $\left\| \Psi_e(f,\lambda,\mu) \right\|=\left| 1+\frac\mu2 \right|$. So, inequality \eqref{FS-for-Ste} holds or fails depending on the relationship between $\lambda$ and $\mu$. Thus  $f\in\Sta\setminus\St $.
\end{examp}

\medskip

\end{document}